\let\@fnsymbol\@arabic
\newcommand{\id}{{ {\mathbbm{1}}}}
\newcommand{\tr}{{\rm tr}}
\newcommand{\dev}{{\rm dev}}
\newcommand{\Sym}{{\rm Sym}}
\newcommand{\sym}{{\rm sym}}
\newcommand{\skw}{{\rm skew}}
\newcommand{\Curl}{{\rm Curl}}
\newcommand{\Div}{{\rm Div}}
\newcommand{\anti}{{\textrm{Anti}}}
\newcommand{\so}{\mathfrak{so}}
\newcommand{\norm}[1]{\|#1\|}
\def\dd{\displaystyle}
\newtheorem{theorem}{Theorem}[section]
\newtheorem{lemma}[theorem]{Lemma}
\newtheorem{remark}[theorem]{Remark}
\newtheorem{proposition}[theorem]{Proposition}
\newtheorem{definition}[theorem]{Definition}
\def\barr{\begin{array}}
	\def\earr{\end{array}}
\def\bec#1{\begin{equation}\label{#1}}
\def\becn{\begin{equation*}}
\def\endec{\end{equation}}
\def\endecn{\end{equation*}}
\def\dd{\displaystyle}
\def\bfm#1{ ox{\boldmat}}
\title{Rayleigh waves  in isotropic  elastic  materials with micro-voids }
\author{Emilian Bulgariu\thanks{ Emilian Bulgariu,\ Department of Exact Sciences, Faculty of Horticulture,   Ia\c si University of Life Sciences (IULS),  Aleea Mihail Sadoveanu no.3, 700490 Ia\c si,
		Romania, email:  ebulgariu@uaiasi.ro}  \quad and \quad  Ionel-Dumitrel Ghiba\thanks{ Ionel-Dumitrel Ghiba, \ Department of Mathematics,  Alexandru Ioan Cuza University of Ia\c si,  Blvd.
		Carol I, no. 11, 700506 Ia\c si,
		Romania; and  Octav Mayer Institute of Mathematics of the
		Romanian Academy, Ia\c si Branch,  700505 Ia\c si, email:  dumitrel.ghiba@uaic.ro} \quad  and \quad  Hassam Khan\thanks{Hassam Khan,    Department of Mathematics and Statistics,  Institute of Business and Management Sciences (IOBM), Karachi. Pakistan, email: dr.hassam@iobm.edu.pk}
\\	and \quad  Patrizio Neff\,\thanks{Patrizio Neff,  \ \ Head of Lehrstuhl f\"{u}r Nichtlineare Analysis und Modellierung, Fakult\"{a}t f\"{u}r
	Mathematik, Universit\"{a}t Duisburg-Essen,  Thea-Leymann Str. 9, 45127 Essen, Germany, email: patrizio.neff@uni-due.de} }
\date{\it Dedicated to Yibin Fu on the occasion of his 60th birthday}
\begin{document}
	\maketitle
	\pagenumbering{}	
	
	\pagenumbering{arabic}	
	
\begin{abstract}	
	In this paper, we show that a general method introduced by Fu and Mielke allows to give a complete answer on the existence and uniqueness of a subsonic solution   describing  the propagation of  surface waves in an isotropic half space modelled with the linear   theory of isotropic elastic materials with  micro-voids. Our result is valid for the entire class of materials admitting real wave propagation which include   auxetic materials (negative Poisson's ration)  and  composite materials with negative-stiffness inclusions  (negative Young's modulus). Moreover, the used method allows to formulate a simple and complete numerical strategy for the computation of the solution.  
	
	\medskip
  
  \noindent\textbf{Keywords:} relaxed micromorphic model,  materials with micro-voids,  Riccati equation, Rayleigh waves, secular equation, existence and uniqueness, micro-voids model, Fu and Mielke's method.
  
  \medskip
  
  \noindent\textbf{AMS 2020 MSC:} 74J15, 74M25, 74H05, 74J05
	
\end{abstract}

\begin{footnotesize}
	\tableofcontents
\end{footnotesize}

\section{Introduction}

Starting with \cite{NeffGhibaMicroModel} and \cite{MadeoNeffGhibaW} we have re-investigated the general micromorphic model \cite{Eringen99,Mindlin64} and we have introduced the relaxed micromorphic model which is general enough to incorporate all known subclasses of micromorphic models (Cosserat, microstretch, micro-voids, etc) but in the same time most simple to be considered in practice and for fitting the constitutive parameters. The relaxed micromorphic model can be used for    engineering metamaterials design and metamaterial shields for inner protection and outer tuning \cite{demore2022unfolding,rizzi2022towards,ramirez2023multi,voss2022modeling,rizzi2022boundary,rizzi2022metamaterial}. In the micromorphic model $P=(P_{ij}):\Omega\times [0,T]\rightarrow \mathbb{R}^{3 \times 3}$  describes the substructure of the material which can rotate, stretch, shear and shrink, while $u=(u_i) :\Omega\times [0,T]\rightarrow  \mathbb{R}^3$  is the displacement of the macroscopic material points. Due to the coercive inequalities proved by Neff, Pauly and  Witsch \cite{NeffPaulyWitsch,NPW2,NPW3} and by Bauer, Neff, Pauly and Starke \cite{BNPS2,BNPS3} (see also \cite{LNPzamp2013}) we have shown \cite{GhibaNeffExistence,NeffGhibaMadeoLazar} that the  relaxed linear micromorphic model is well-posed for  non-symmetric symmetric Cauchy force stresses (non-vanishing Cosserat couple modulus) as well as for the important case of symmetric Cauchy force stresses (zero Cosserat couple modulus), even if the internal energy density is not positive definite in terms of the strain tensors previously used in the literature. The new inequalities given by Lewintan, M\"uller and Neff \cite{lewintan2021korn,lewintan2021nevcas} and by Gmeineder, Lewintan, 
 and Neff  \cite{gmeineder2023optimal,gmeineder2022korn} lead to existence results even in more degenerated cases (less  partial derivatives involved in the model).

When the micromorphic  theory is linked to the theory of dislocations, there are many reasons to assume that the Cosserat couple modulus vanishes and the Cauchy force stresses are symmetric, see the paper by Kr\"oner and \cite{NeffGhibaMicroModel}. The particular relaxed micromophic model  constructed for $P=\zeta \cdot \id$ will be called \textit{micro-voids model} in the rest of the paper. We show that the obtained initial-boundary value problem is identical to the micro-voids theory given by  Goodman and Cowin \cite{goodman1972continuum} and by Nunziato and Cowin  \cite{NunziatoCowin79,CowinNunziato83,IesanVoid,IesanCarteElast}. Therefore, from two different perspectives and approaches we are arriving at the same 
 total energy of the model (see \cite{IesanCarteElast,GhibaZAMM13})  given by\footnote{For $ a,b\in \mathbb{R}^{3 \times 3} $ we let $\langle{a,b}\rangle_{\mathbb{R}^3}$ denote the scalar product on $ \mathbb{R}^3$ with associated vector norm $\norm{a}^2=\langle {a,a}\rangle$. We denote by $\mathbb{R}^{3\times 3}$   the set of real $3 \times 3$ second order tensors, written with capital letters. Matrices will be denoted by bold symbols, e.g. ${X}\in \mathbb{R}^{3\times 3}$, while $X_{ij}$ will denote its component. The standard Euclidean product on $\mathbb{R}^{3 \times 3}$ is given by $\langle{ {X},{Y}}\rangle_{\mathbb{R}^{3 \times 3}}=\tr( {X}\,{Y}^T)$, and thus, the Frobenious tensor norm is $\norm{ {X}}^2=\langle{ {X}, {X}}\rangle_{\mathbb{R}^{3 \times 3}}$. In the following we omit the index $\mathbb{R}^3, \mathbb{R}^{3\times 3}$. The identity tensor on $\mathbb{R}^{3\times 3}$ will be denoted by $\id$, so that $\tr( {X})=\langle{ {X},\id}\rangle$. We let $\Sym$  denote the set of symmetric tensors. We adopt the usual abbreviations of  Lie-algebra theory, i.e., $\mathfrak{so}(3):=\{ {A}\,\in \mathbb{R}^{3\times 3}| {A}^T=- {A}\}$ is the Lie-algebra of skew-symmetric tensors and $\mathfrak{sl}(3):=\{ {X}\, \in \mathbb{R}^{3\times 3} |\tr( {X})=0 \}$  is the Lie-algebra of traceless tensors. For all $ {X} \in\mathbb{R}^{3\times 3}$  we set $\sym\,  {X}=\frac{1}{2}( {X}^T+ {X}) \in\Sym,\,\skw  {X}=\frac{1}{2}( {X}- {X}^T)\in \mathfrak{so}(3)$ and the deviatoric (trace-free) part $\dev\, {X}= {X}-\frac{1}{3}\tr( {X})\in\, \mathfrak{sl}(3)$ and we have the orthogonal Cartan-decomposition of the Lie-algebra $
 	\mathfrak{gl}(3)=\{\mathfrak{sl}(3)\cap \Sym(3)\}\oplus\mathfrak{so}(3)\oplus\mathbb{R}\cdot\id,\ 
 	{X}= \dev\,\sym\, {X}+\skw\, {X}+\frac{1}{3}\tr( {X})\,\id.\
 	$} 
\begin{align}\label{energyvoid}
\mathcal{L}  =\int_\Omega\bigg(&\frac{1}{2}\varrho_0\,\|u_{,t}\|^2+\frac{1}{2}\varrho_0\,\varkappa\,\|\zeta_{,t}\|^2+\mu_{\rm e}\,\| \sym \,{\rm D}u \|^2+ \frac{\lambda_{\rm e}}{2}\, [\tr({\rm D}u)]^2+\frac{\xi }{2}\,\zeta^2+\frac{\alpha}{2}\,\|{\rm D}\zeta\|^2+\beta \,\tr({\rm D}u)\,\zeta\bigg)\, dv,
\end{align}
with $\mu_{\rm e},\lambda_{\rm e},\xi  ,\alpha$, $\beta $ the constant constitutive coefficients and $\varkappa$ a micro-inertia coefficient. 
However, from our modelling process, we are able to observe that the coefficients $\xi $ and $\beta $ from above are defined by the elastic bulk modulus $\kappa_{\rm  e}=\frac{2\mu_{\rm e}+3\lambda_{\rm e}}{3}$ and the microscopic bulk modulus $\kappa_{\rm  micro}$ through $
\beta =-3\kappa_{\rm e},$ 
$\xi =9(\kappa_{\rm e}+\kappa_{\rm micro})$. When the  microscopic bulk modulus $\kappa_{\rm  micro}\to \infty$, see Section \ref{subsC}),  $\xi \to \infty$, while $\mu_{\rm e} $ and $ \lambda_{\rm e} $ tend to the macroscopic classical Lam\'e coefficients and we recover classical linear elasticity. Moreover, in this limit case, from the boundedness of the internal energy density it follows that $\zeta\to 0$ and the total energy is that from classical linear elasticity.
 
 When it comes to understanding how waves travel on surfaces, a significant focus is placed on studying seismic waves. Such waves, crucial for modelling the waves generated by earthquakes on Earth's surface, are known as Rayleigh waves \cite{Rayleigh}, characterized by a mixture of horizontal (longitudinal) and vertical (transverse) movements. The horizontal displacement components align with the direction of wave propagation, while the vertical components extend into the surrounding half-space. The investigation of Rayleigh waves has garnered considerable interest among scientists due to their practical applications in fields such as seismology and near-surface geophysical exploration.

There are several methods available for developing solutions that describe the propagation of Rayleigh waves in classical and generalized models of elastic materials. For example, Hayes (referenced in \cite{hayes1986inhomogeneous, boulanger1993bivectors}) introduced the "directional-ellipse method" to systematically study all inhomogeneous plane waves that can propagate in classical linear elasticity. In classical elasticity of isotropic materials, the solution for the Rayleigh surface wave speed, also known as the "secular equation," has been explored numerically (as seen in references like \cite{Rayleigh, hayes1962note}). Hayes and Rivlin also delved into the secular equation \cite{HayesRivlin}. For a more comprehensive understanding of this problem, additional details can be found in the book \cite{Achenbach} and other works like \cite{Nkemzi, Nkemzi2, Rahman, Malischewsky, Li, destrade2007seismic, Ting2, Ting, VinhMalischewsky1, VinhMalischewsky2, VinhOgden}.
Another method, known as the "Stroh formalism," was introduced in \cite{stroh1958dislocations}. Stroh, in \cite{stroh1962steady}, simplified the complex secular equation originally derived by Synge \cite{synge1956elastic}, providing a real expression for it. Currie's work \cite{currie1974rayleigh} also addresses this. In classical elasticity, the Stroh formalism has successfully solved the problem of finding explicit secular equations for certain types of material, such as anisotropic and inhomogeneous materials. Relevant references for this include \cite{taziev1989dispersion, mozhaev1995some, destrade2001explicit, destrade2007seismic, destrade2002incompressible, ting2002explicit,lazar2005cosserat}.
However, in the context of generalized models, the Stroh formalism does not yield an explicit analytical form of the secular equation.

It is important to note that in the process of constructing the solution, including the displacement and other unknown variables in the generalized model, all methods impose certain restrictions, which form an admissible set for the Rayleigh wave solution. These restrictions must be satisfied by the Rayleigh surface wave speed, which is the solution of the secular equation. For instance, in the case of classical isotropic materials, the Rayleigh surface wave speed ($v_s$) must adhere to the inequality $0 \leq v < v_s = \sqrt{\frac{\lambda_{\rm e}+2\mu_{\rm e}}{\varrho_0}}$.

In some generalized continuum models, the secular equation, which provides the surface wave speed, is still not explicitly derived. Even when the secular equation is available in explicit analytical form, authors often do not demonstrate the existence and uniqueness of a solution within the admissible set. In most cases, at best, this critical aspect is only speculated upon or numerically explored for a few specific materials \cite{ChiritaGhiba2,Chirita2013,ChiritaDanescu2015,Bucur2016,GalesChirita2020}.

Regarding the model discussed in this paper, the existence of an admissible solution has been established in \cite{ChiritaGhiba2} under certain hypotheses, i.e., \begin{equation}
\label{auxasump0}{v}_{t}^{2}\leq {v}_{s}^{2}-\displaystyle\frac{{\beta }^{2}}%
{{\varrho_0 }\,\xi } \ \ \text{ and } \ \ {v}_{t}^{2}
\leq\frac{\alpha}{{\varrho_0 } \,\varkappa}, \quad \text{where}\quad {v}_t=\sqrt{\frac{\mu_{\rm e}}{\varrho_0 }}.
\end{equation} However, the uniqueness of such an admissible solution has not been addressed.

It is important to note that the conditions presented in  \eqref{auxasump0} have their limitations. On one hand, when $\alpha$ approaches zero, the constraints in \eqref{auxasump0} indicate that the limit of the group/phase velocity of the acoustic branch of the shear-rotational wave as $\omega$ approaches zero (or as $k$ approaches zero), which is denoted as ${v}_t$, tends to zero. This contradicts the principles of classical elasticity.
On the other hand, when $\xi$ tends to infinity (corresponding to the model converging to classical linear elasticity), the restriction in \eqref{auxasump0}$_1$ implies that the result established in \cite{ChiritaGhiba2} is valid only when ${c}_t^2 \leq {c}_s^2$ (assuming $\mu_{\rm e}>0$ and $\lambda_{\rm e}+\mu_{\rm e}\geq 0$).

To address the shortcomings mentioned earlier, our current study explores an alternative approach for examining Rayleigh waves in elastic materials containing voids. This method was initially introduced by Fu and Mielke \cite{fu2002new,mielke2004uniqueness} and was inspired by the prior work of Mielke and Sprenger \cite{mielke1998quasiconvexity}, which has stronger ties to control theory \cite{knobloch2012topics} than surface wave propagation. Notably, this approach stands out conceptually from other methods and offers a straightforward numerical algorithm for computing Rayleigh wave solutions.
What distinguishes the {\it Fu and Mielke's method} further is that it provides an elegant proof of existence and uniqueness while offering practical computational advantages. It allows us to derive the explicit form of the secular equation without the need for a priori knowledge of analytical expressions (as functions of wave speed) for certain eigenvalues and their associated eigenvectors, which most other methods require. Indeed, calculating these symbolic expressions for eigenvalues and eigenvectors as functions of wave speed poses a primary challenge in many generalized models, with the exception of a few models where this task is relatively straightforward, such as classical isotropic linear elasticity \cite{Achenbach,hayes1962note}, albeit under certain restrictive conditions imposed on the constitutive coefficients.

In a related development, it has been observed in \cite{khan2022existence} that the method initially introduced in \cite{mielke2004uniqueness,fu2002new} within the context of classical elasticity is applicable to generalized elastic materials as well. By leveraging properties of an "algebraic Riccati equation" \cite{mielke2004uniqueness,fu2002new}, we deduce the form of the secular equation for isotropic elastic materials with micro-voids. Additionally, we establish that this secular equation possesses a solution within the admissible set and demonstrate its uniqueness.

Our result is valid for materials characterized  by constitutive parameters satisfying\footnote{In terms of the Young's modulus and the Poisson's ratio these restrictions imply
	$
	(E  <0$ $\text{and}$ $\nu   <-1$ $\text{or}$ $(E  >0$ $\text{and}$ $(-1<\nu   <\frac{1}{2}$  $\text{or}$ $\nu   >1)), $
	while in terms of the constitutive parameters of the relaxed micromorphic model the inequality \eqref{rwc10}$_4$ reads 
	$(\lambda_{\rm e}+2\mu_{\rm e})(\kappa_{\rm e}+\kappa_{\rm micro})>\kappa_{\rm e}^2.
	$
	 }
\begin{align}\label{rwc10}\mu_{\rm e}>0, \qquad \lambda_{\rm e}+2\mu_{\rm e}>0, \qquad \alpha>0,\qquad  (\lambda_{\rm e}+2\mu_{\rm e})\,\xi >\beta ^2.\end{align}
Hence, the approach given in this paper
\begin{itemize}
	\item  is the most comprehensive, similar to the findings in \cite{ChiritaGhiba2}, and results in a straightforward numerical algorithm,
	\item is applicable to the entire spectrum of materials that allow real wave propagation, encompassing.  
	\begin{itemize}
		\item auxetic materials (negative Poisson's ratio) \cite{Lakes93,lakes1987foam, burns1987negative,rothenburg1991microstructure,rothenburg1991microstructure,evans1989microporous, jiang2014negative, brandel2001negative, grima2006negative, yang2012non, friis1988negative},  
		\item composite materials with negative-stiffness inclusions  (negative Young's modulus) \cite{lakes2001extreme}. 
	\end{itemize}
\end{itemize}

\section{The micro-voids model -- a particular  case of the relaxed micromorphic model}\label{subsC}\setcounter{equation}{0}
In this subsection we provide the initial-boundary value problem of a particular  relaxed micromorphic isotropic  model  \cite{NeffGhibaMicroModel,MadeoNeffGhibaW,MadeoNeffGhibaWZAMM,madeo2016reflection,NeffGhibaMadeoLazar} when the micro-distortion tensor $P$ is assumed to a priori  have the special structure ${P}=\zeta\!\!\cdot\!\! \id\in\mathbb{R}^{3 \times 3}$.  Such a micro-distortion tensor is not able to   describes the general micro-rotation, micro-stretching or micro-shearing, but uniform stretching and shrinking of the microstructure (micro-void).  The body occupying the domain $\overline{\Omega}\subset \mathbb{R}^3$ is referred to a fixed system of rectangular Cartesian axes  $Ox_i$ $(i=1,2,3)$, $\{e_1, e_2, e_3\}$ being the unit vectors of these axes. We denote  by $n$ the outward unit normal on $\partial\Omega$.

\subsection{The relaxed micromorphic model}

In the relaxed micromorphic model, in which a positive  Cosserat modulus $\mu_{\rm c} >0$ is related to the isotropic Eringen-Claus  model for dislocation dynamics \cite{Eringen_Claus69,EringenClaus,Eringen_Claus71}, the free energy is given by\footnote{Let us recall some other useful notations for the present work. 
	 For a  regular enough function $f(t,x_1,x_2,x_3)$,  $f_{,t}$ denotes the derivative with respect to the time $t$, while  $ \frac{\partial\, f}{\partial \,x_i}$ denotes the $i$-component of the gradient ${\rm D}f$.  
	For vector fields $u=\left(    u_1, u_2, u_3\right)^T$ with  $u_i\in 
	{\rm H}^1(\Omega)\,=\,\{u_i\in {\rm L}^2(\Omega)\, |\, {\rm D}\, u_i\in {\rm L}^2(\Omega)\}, $  $i=1,2,3$,
	we define
	$
	\mathrm{D} u:=\left(
	{\rm D}\,  u_1\,|\,
	{\rm D}\, u_2\,|\,
	{\rm D}\, u_3
	\right)^T.
	$
	The corresponding Sobolev-space will be also denoted by
	$
	{\rm H}^1(\Omega)$.  In addition, for a tensor field
	${P}$ with rows in ${\rm H}({\rm div}\,; \Omega)$, i.e.,
	$
	{P}=\begin{footnotesize}\begin{footnotesize}\begin{pmatrix}
	{P}^T.e_1\,|\,
	{P}^T.e_2\,|\,
	{P}^T\, e_3
	\end{pmatrix}\end{footnotesize}\end{footnotesize}^T$ with $( {P}^T.e_i)^T\in {\rm H}({\rm div}\,; \Omega):=\,\{v\in {\rm L}^2(\Omega)\, |\, {\rm div}\, v\in {\rm L}^2(\Omega)\}$, $i=1,2,3$,
	we define
	$
	{\rm Div}\, {P}:=\begin{footnotesize}\begin{footnotesize}\begin{pmatrix}
	{\rm div}\, ( {P}^T.e_1)^T\,|\,
	{\rm div}\, ( {P}^T.e_2)^T\,|\,
	{\rm div}\, ( {P}^T\, e_3)^T
	\end{pmatrix}\end{footnotesize}\end{footnotesize}^T
	$
	while for tensor fields $ {P}$ with rows in ${\rm H}({\rm curl}\,; \Omega)$, i.e.,
	$
	{P}=\begin{footnotesize}\begin{footnotesize}\begin{pmatrix}
	{P}^T.e_1\,|\,
	{P}^T.e_2\,|\,
	{P}^T\, e_3
	\end{pmatrix}\end{footnotesize}\end{footnotesize}^T$ with $( {P}^T.e_i)^T\in {\rm H}({\rm curl}\,; \Omega):=\,\{v\in {\rm L}^2(\Omega)\, |\, {\rm curl}\, v\in {\rm L}^2(\Omega)\}
	$, $i=1,2,3$,
	we define
	$
	{\rm Curl}\, {P}:=\begin{footnotesize}\begin{footnotesize}\begin{pmatrix}
	{\rm curl}\, ( {P}^T.e_1)^T\,|\,
	{\rm curl}\, ( {P}^T.e_2)^T\,|\,
	{\rm curl}\, ( {P}^T\, e_3)^T
	\end{pmatrix}\end{footnotesize}\end{footnotesize}^T
	.
	$
}
\begin{align}\label{XXXX}
W_{\rm relax}&=\mu_{\rm e}  \|\sym (\mathrm{D}u -{P})\|^2+\mu_{\rm c} \|\skw(\mathrm{D}u -{P})\|^2+ \frac{\lambda_{\rm e}}{2}\, [\tr(\mathrm{D}u -{P})]^2+\mu_{\rm micro} \|\sym \, {P}\|^2+ \frac{\lambda_{\rm micro}}{2} [\tr({P})]^2\notag\\&
\quad \quad  +\frac{\mu_{\rm e}L_{\rm c}^2}{2}\left[{a_1}\| \dev\,\sym \,\Curl\, {P}\|^2 +{a_2}\| \skw \,\Curl\, {P}\|+ \frac{a_3}{3}\, \tr(\Curl\, {P})^2\right],
\end{align}
where $(\mu_{\rm e},\lambda_{\rm e}) $,  $(\mu_{\rm micro},\lambda_{\rm micro})$,  $\mu_{\rm c}, L_{\rm c} $  and $( a_1, a_2, a_3)$  are the elastic moduli  representing the parameters related to the meso-scale,  the
parameters related to the micro-scale, the Cosserat couple modulus, the characteristic length, and the three
general isotropic curvature parameters (weights), respectively.  Formally,  letting  $L_{\rm c}\to \infty$  means a ``zoom" into the micro-structure while $L_{\rm c}\to 0$ means considering  arbitrary large bodies while retaining the size of the unit-cell or keeping the dimensions of the body fixed while reducing the dimensions of the unit cell to zero, or, in other words, ``no special effects of the microstructure taking into account" (classical elasticity).

The complete system of linear partial differential equations in terms of the kinematical unknowns $u$ and $P$ is given by
\begin{align}\label{eqisoup}
\varrho_0\,u_{,tt}&={\rm Div}[\underbrace{2\,\mu_{\rm e} \, \sym(\mathrm{D}u -{P})+2\,\mu_{\rm c} \, \skw(\mathrm{D}u -{P})+\lambda_{\rm e} \,\tr(\mathrm{D}u -{P}){\cdot} \id}_{\textrm{$=:\!\sigma$ \ the non-symmetric force-stress tensor}}]+f\, ,\\\notag
\varrho_0\, \eta\,\tau_{\rm c}^2\,\,P_{,tt}&=-{\mu_{\rm e}L_{\rm c}^2}\,\Curl [\underbrace{a_1 \, \dev\,\sym \,\Curl\, {P}+a_2\, \skw \,\Curl\, {P} +\frac{a_3}{3}\, \tr(\Curl\, {P}){\cdot} \id}_{=:\,m\,\textrm{ the second-order moment stress  tensor }}]\\&\quad\ +2\,\mu_{\rm e} \, \sym(\mathrm{D}u -{P})+\lambda_{\rm e} \tr(\mathrm{D}u -{P}){\cdot} \id-\underbrace{(2\,\mu_{\rm micro}\, \sym \, {P}+\lambda_{\rm micro} \tr ({P}){\cdot} \id)}_{=:\,\sigma_{\rm micro} \ \text{the symmetric  microstress tensor}}+{M}\,  \ \ \ \text {in}\ \ \  \Omega\times [0,T],\notag
\end{align}
where $f :\Omega\times [0,T]\rightarrow  \mathbb{R}^3$ describes the external body force,  ${M}:\Omega\times [0,T]\rightarrow \mathbb{R}^{3 \times 3}$ describes the external body moment, $\varrho_0$ is the mass density and  $\eta\,\tau_{\rm c}^2$ is the inertia coefficient, with $\eta>0$ a weight parameter and $\tau_{\rm c}$ the internal characteristic time \cite[page 163]{Eringen99}. 

To the system of partial differential equations of this model we adjoin the natural boundary conditions
\begin{align} \label{bcdev}
{u}({x},t)=\widetilde{u}({x},t), \ \ \ \quad \quad {P}_i({x},t)\times n(x) =\widetilde{p}({x},t), \ \ \ i=1,2,3, \ \ \
\ ({x},t)\in\partial \Omega\times [0,T],
\end{align}%
and the nonzero initial conditions
\begin{align}\label{icdev}
&{u}({x},0)={u}_0(
x), \quad\quad\quad \dot{u}({x},0)=\dot{u}_0(
x),\quad\quad\quad {P}({x},0)={P}_0(
x), \quad\quad\quad  \dot{P}({x},0)=\dot{P}_0(
x),\ \ \text{\ \ }{x}\in \bar{\Omega}\,,
\end{align}%
where  $\widetilde{u}$, $\widetilde{p}$, ${u}_0, \dot{u}_0, {P}_0$ and $\dot{P}_0$ are prescribed functions,   satisfying $u_0(x)=\widetilde{u}$ and $P_{0i}(x)\times n(x)=\widetilde{p}$ on $\partial \Omega$.

The  internal energy is positive definite in terms of the independent constitutive variables $\mathrm{D}u -{P}$, $\sym \, {P}$, $\Curl\, {P}$ if and only if \begin{align}\mu_{\rm e}&>0,\qquad \qquad\quad \ \,  \kappa_{\rm e}:=\frac{2\,\mu_{\rm e}+3\,\lambda_{\rm e}}{3}>0,\qquad\qquad\qquad  \ \  \mu_{\rm c}>0,  \\\mu_{\rm micro}&>0, \qquad\qquad \kappa_{\rm micro}:=\frac{2\,\mu_{\rm micro}+3\,\lambda_{\rm micro}}{3}>0,\qquad\qquad  a_1>0, \qquad \qquad a_2>0, \qquad\qquad  a_3> 0.\notag\end{align}
However, existence and uniqueness of the solution is guaranteed for the weaker conditions 
\begin{align}\mu_{\rm e}&>0,\qquad \qquad\quad \ \,  \kappa_{\rm e}:=\frac{2\,\mu_{\rm e}+3\,\lambda_{\rm e}}{3}>0,\qquad\qquad\qquad  \ \  \mu_{\rm c}\boldsymbol{\geq} 0,  \\\mu_{\rm micro}&>0, \qquad\qquad \kappa_{\rm micro}:=\frac{2\,\mu_{\rm micro}+3\,\lambda_{\rm micro}}{3}>0,\qquad\qquad  a_1>0, \qquad \qquad a_2\boldsymbol{\geq} 0, \qquad\qquad  a_3\boldsymbol{\geq}   0,\notag\end{align}
due to the  new inequalities given by Lewintan, M\"uller and Neff \cite{lewintan2021korn,lewintan2021nevcas} and by Gmeineder, Lewintan, 
and Neff  \cite{gmeineder2023optimal,gmeineder2022korn}.

We say that there exists  {\it real plane waves}\footnote{The plane wave is called ``real'' since it is defined by real values of $\omega$.}  in a given  direction $\xi=(\xi_1,\xi_2,\xi_3)$, $\lVert{\xi}\rVert^2=1$,  if for every  wave number  $k$ the system of partial differential equations \eqref{eqisoup} admits a plane wave ansatz solution 
only for  real frequencies $\omega\in \mathbb{R}$.  It is proven in \cite{neff2017real} that the dynamic relaxed micromorphic model \eqref{eqisoup} admits real planar waves if and only if 
\begin{align}\label{relrw}
	\mu_{\rm e}>0,\qquad\mu_{\rm micro}>0,\qquad \mu_{\rm c}\geq0,\qquad \kappa_{\rm e}+\kappa_{\rm micro}>0,\qquad 2\,\mu_{\rm macro}+\lambda_{\rm macro}>0,
\end{align}	 
where
\begin{align}
 \kappa_{\rm macro}:=\frac{\kappa_{\rm e}\,\kappa_{\rm micro}}{\mu_{\rm e}+\kappa_{\rm micro}},\qquad \mu_{\rm macro}:=\frac{\mu_{\rm e}\,\mu_{\rm micro}}{\mu_{\rm e}+\mu_{\rm micro}}, \qquad \lambda_{\rm macro}:=\frac{3\,\kappa_{\rm macro}-2\,\mu_{\rm macro}}{3}.
\end{align}

As remarked in \cite{neff2017real}  if $\mu_{\rm e}+\mu_{\rm micro}>0$ and $\kappa_{\rm e}+\kappa_{\rm micro}>0$, the macroscopic parameters are less or equal than respective microscopic parameters, namely
\begin{align}
\kappa_{\rm micro}&\geq\kappa_{\rm macro},\qquad\qquad \mu_{\rm micro}\geq\mu_{\rm macro},\notag\\
\kappa_{\rm e}&\geq\kappa_{\rm macro},\qquad\qquad\quad\ \  \mu_{\rm e}\geq\mu_{\rm macro},
\end{align} 
while letting  $\mu_{\rm micro}\rightarrow+\infty$ and $\kappa_{\rm micro}\rightarrow+\infty$ (or $\mu_{\rm micro}\rightarrow+\infty$ and $\lambda_{\rm micro}>\mathrm{const}$) generates the limit condition for real wave velocities ($\mu_{\rm e}\rightarrow\mu_{\rm macro}$)
\begin{align}
\mu_{\rm macro}>0,\qquad \mu_{\rm c}\geq0,\qquad 2\,\mu_{\rm macro}+\lambda_{\rm macro}>0\label{Macro}
\end{align}
which coincides, up to the Cosserat couple modulus $\mu_{\rm c}$, with the strong ellipticity condition in isotropic linear elasticity, and it coincides fully with the condition for real wave velocities in Cosserat elasticity \cite{khan2022existence}.

\subsection{The micro-voids  model}\label{voidapp}

In this subsection we consider  a particular relaxed micromorphic model and we show that the obtained system of partial differential equation coincide (after an identification of  parameters) with that from the micro-void model. 

To this aim, let us consider the  particular case of \eqref{eqisoup} in which we assume a priori the kinematical constraint $P=\zeta{\cdot} \id$. 
The boundary condition which follows from the  tangential boundary condition \eqref{bcdev} is then the strong anchoring condition
\begin{align}
\zeta(x,t)=\widetilde{ \zeta}({x},t)\quad \quad \text{on} \quad \partial \Omega\times [0,T].
\end{align}

Note that for all differentiable functions $\zeta:\mathbb{R}\rightarrow\mathbb{R}$ on $\Omega$ we have\footnote{We use the canonical identification of $\mathbb{R}^3$ with $\so(3)$, and, for
	$
	{A}=	\begin{footnotesize}\begin{pmatrix}
	0 &-a_3&a_2\\
	a_3&0& -a_1\\
	-a_2& a_1&0
	\end{pmatrix}\end{footnotesize}\in \so(3)
	$
	we consider the operators $\anti:\mathbb{R}^3\rightarrow \so(3)$ through
	$ (\anti(v))_{ij}=-\epsilon_{ijk}\,v_k, \  \forall \, v\in\mathbb{R}^3$,
	where $\epsilon_{ijk}$ is the totally antisymmetric third order permutation tensor.}
$
\Curl(\zeta{\cdot} \id)=-\anti\, ({\rm D}\zeta)\in\so(3)$
and we obtain
that the total energy can be written as 
\begin{align}
\mathcal{L}(u_{,t},\zeta_{,t},\mathrm{D}u  -\zeta\!\!\cdot\!\! \id, \mathrm{D}\,\zeta)
=\int_\Omega\bigg(&\frac{1}{2}\,\varrho_0\,\|u_{,t}\|^2+\,\frac{1}{2}\,\varrho_0\,\eta\,\tau_{\rm c}^2\,\,\,\|(\zeta\!\!\cdot\!\! \id)_{,t}\|^2\\&+
\mu_{\rm e}  \|\sym (\mathrm{D}u -\zeta\!\!\cdot\!\! \id)\|^2+\mu_{\rm c} \|\skw\, \mathrm{D}u \|^2+ \frac{\lambda_{\rm e}}{2}\, [\tr(\mathrm{D}u -\zeta\!\!\cdot\!\! \id)]^2\notag\\&+\mu_{\rm micro} \|\sym (\zeta\!\!\cdot\!\! \id)\|^2+ \frac{\lambda_{\rm micro}}{2} [\tr(\zeta\!\!\cdot\!\! \id)]^2  +\frac{\mu_{\rm e}L_{\rm c}^2}{2}{a_2}\| \skw \,\Curl(\zeta\!\!\cdot\!\! \id)\|^2\bigg)\,dv.\notag
\end{align}
Since $\|\skw \,\mathrm{D}u \|^2$ alone is  not infinitesimally  frame-invariant, $\mu_{\rm c}$ has to vanish, and the total energy is given by
\begin{align}
\mathcal{L}(u_{,t},\zeta_{,t},&\mathrm{D}u  -\zeta\!\!\cdot\!\! \id, \mathrm{D}\,\zeta)=\int_\Omega\bigg(\frac{1}{2}\,\varrho_0\,\|u_{,t}\|^2+\,\frac{3}{2}\,\varrho_0\,\eta\,\tau_{\rm c}^2\,\zeta_{,t}^2+
\mu_{\rm e}  \|\sym (\mathrm{D}u -\zeta\!\!\cdot\!\! \id)\|^2+ \frac{\lambda_{\rm e}}{2}\, [\tr(\mathrm{D}u -\zeta\!\!\cdot\!\! \id)]^2\notag\\&\qquad \qquad \qquad \qquad \qquad +\frac{3}{2}(2\,\mu_{\rm micro} + 3\,\lambda_{\rm micro})\, \zeta^2  +\mu_{\rm e}L_{\rm c}^2{a_2}\|\mathrm{D}\, \zeta\|^2\bigg)\,dv\,
\notag\\
=\int_\Omega\bigg(&\frac{1}{2}\,\varrho_0\,\|u_{,t}\|^2+\,\frac{3}{2}\,\varrho_0\,\eta\,\tau_{\rm c}^2\,\,\,\zeta_{,t}^2+
\mu_{\rm e}  \|\sym\, \mathrm{D}u\|^2-2\,
\mu_{\rm e}  \tr (\mathrm{D}u)\, \zeta+
3\,\mu_{\rm e}  \zeta^2\notag\\&+ \frac{\lambda_{\rm e}}{2}\, [\tr(\mathrm{D}u )]^2-2\,\frac{\lambda_{\rm e}}{2}\, \tr(\mathrm{D}u)\,3\, \zeta+\frac{\lambda_{\rm e}}{2}\, 9\,\zeta^2+\frac{3}{2}(2\,\mu_{\rm micro} + 3\,\lambda_{\rm micro}) \zeta^2  +\mu_{\rm e}L_{\rm c}^2{a_2}\|\mathrm{D}\, \zeta\|^2\bigg)\,dv\,
\\
=\int_\Omega\bigg(&\frac{1}{2}\,\varrho_0\,\|u_{,t}\|^2+\,\frac{3}{2}\,\varrho_0\,\eta\,\tau_{\rm c}^2\,\,\,\zeta_{,t}^2+
\mu_{\rm e}  \|\dev\,\sym\, \mathrm{D}u\|^2+ \frac{3\lambda_{\rm e}+2\mu_{\rm e}}{6}\, [\tr(\mathrm{D}u )]^2\notag\\&-(2\,
\mu_{\rm e}  +3\,\lambda_{\rm e})\, \tr(\mathrm{D}u)\, \zeta+\frac{3}{2}(2\,\mu_{\rm e} + 3\,\lambda_{\rm e}+2\,\mu_{\rm micro} + 3\,\lambda_{\rm micro}) \zeta^2 +\mu_{\rm e}L_{\rm c}^2{a_2}\|\mathrm{D}\, \zeta\|^2\bigg)\,dv\,\notag
\end{align}
and the power functional is
$
\Pi(t)=\int_\Omega (\dd\langle f,{u}_{,t}\rangle +\langle M,\zeta_{,t}\cdot \id\rangle)\, dv=\int_\Omega (\dd\langle f,{u}_{,t}\rangle +\underbrace{\tr(M)}_{:=\ell}\,\zeta _{,t}\rangle)\, dv\, .
$

A direct identification of the coefficients, by comparing the energies in the dislocation form (written with $\Curl$) with the specific internal energy and the kinetic energy in the Cowin-Nunziato form for the micro-voids model \eqref{energyvoid}, shows that the coefficient of the micro-voids theory can be expressed in terms of our constitutive coefficients as
\begin{align}\label{iden}
\varkappa&=3\,\eta \,\tau_c^2, \quad\quad \quad\quad\alpha=2\, \mu_{\rm e}L_c^2a_2,\quad\quad \quad\quad\beta =-(2\,\mu_{\rm e}+3\lambda_{\rm e})=-3\kappa_{\rm e},\\
\xi &=3(2\mu_{\rm e}+3\lambda_{\rm e}+2\mu_{\rm micro}+3\lambda_{\rm micro})=9(\kappa_{\rm e}+\kappa_{\rm micro}).\notag
\end{align}

Let us notice some differences between the micro-voids model and the micro-voids theory proposed by Cowin and Nunziato \cite{CowinNunziato83}:
\begin{itemize}
	\item in our version of the micro-voids model the constitutive parameter $\beta$ is not an independent coefficient, as it is in the micro-voids model. It depends on the elastic Lam\'e coefficients $\lambda_{\rm e}$ and $\mu_{\rm e}$.
	\item for elastic materials having a positive definite internal energy, the constitutive parameter $\beta$ is always negative. In some numerical simulation from the literature $\beta$ is taken positive in the context of the micro-voids model \cite{puri1985plane} since the physical relevance of this parameter was not properly understood. 
	\item in our version of the micro-voids model we show how the constitutive parameter $\xi$ is depending on the micro-, and macro-constitutive parameters. This gives them a physical interpretation.
	\item in our version of the micro-voids model there are no ad hoc defined quantities, e.g., $h$ and $g$ in the micro-voids model. All the quantities are defined by the force stress tensor $\sigma$, the moment stress tensor $m$ and by the microstress tensor $\sigma_{\rm micro}$.
\end{itemize}

\begin{remark}
	For the micro-voids model to be operative, we need $a_2>0$, i.e., the presence of $\|\skw \,\Curl P\|^2$ in the expression of the internal energy density.
\end{remark}
We introduce the action functional of the considered system by
\begin{align}
\mathcal{A}=\int_{0}^{T}\int_\Omega\bigg(&\frac{1}{2}\,\varrho_0\,\|u_{,t}\|^2+\,\frac{3}{2}\,\varrho_0\,\eta\,\tau_{\rm c}^2\,\zeta_{,t}^2-
\mu_{\rm e}  \|\sym (\mathrm{D}u -\zeta\!\!\cdot\!\! \id)\|^2-\frac{\lambda_{\rm e}}{2}\, [\tr(\mathrm{D}u -\zeta\!\!\cdot\!\! \id)]^2\notag\\&-\frac{3}{2}(2\,\mu_{\rm micro} + 3\,\lambda_{\rm micro}) \,\zeta^2  -\mu_{\rm e}L_{\rm c}^2{a_2}\|\mathrm{D} \zeta\|^2\bigg)\,dv+\int_{0}^{T}\int_\Omega (\dd\langle f,{u}_{,t}\rangle +\tr(M)\,\zeta _{,t}\rangle)\, dv\,dt.
\end{align}
The corresponding Euler-Lagrange equation for $u$ and $\zeta$ are
\begin{align}\label{eqisaxl}
\varrho_0\,u_{,tt}&=\Div[\underbrace{2\,\mu_{\rm e} \, \sym\,(\mathrm{D}u-\zeta\!\!\cdot\!\! \id) + \lambda_{\rm e}\, \tr(\mathrm{D}u -\zeta\!\!\cdot\!\! \id){\cdot} \id}_{=:\sigma \ \text{the symmetric force-stress tensor}}+f\, ,\\
\varrho_0\,3\,\eta\,\tau_{\rm c}^2\,\zeta_{,tt}&={\rm div}\!\!\!\!\!\!\!\!\!\!\!\!\!\!\underbrace{[2\,\mu_{\rm e}L_{\rm c}^2\,{a_2}\,{\rm D}\zeta]}_{=:h \ \text{the equilibrated micro-stress vector}} \!\!\!\!\!\!\!\!\!\!\!\!\!\!\underbrace{-3(2\mu_{\rm e}+3\lambda_{\rm e}+2\mu_{\rm micro}+3\lambda_{\rm micro})\zeta-(2\,\mu_{\rm e}+3\lambda_{\rm e}){\rm div} \,u}_{=:g \ \text{the equilibrated micro-force}}+\tr \,{M}\, ,\notag
\end{align}
in $\Omega\times [0,T]$, which, in view of our parameter identification \eqref{iden} and  setting  $\ell:=\tr(M)$, is in complete agreement with the equations proposed in the Cowin-Nunziato theory \cite{NunziatoCowin79}, i.e.,
\begin{align}
\mu_{\rm e}\sum_{l=1}^3\frac{\partial^2 u_r }{\partial x_l^2}+(\lambda_{\rm e}+\mu_{\rm e})\dd\sum_{l=1}^3\frac{\partial^2 u_l }{\partial x_l\partial x_r}+\beta \,\frac{\partial \zeta }{\partial x_r}+f&=\varrho_0 \,\frac{\partial^2 \,u_r}{\partial\, t^2},\qquad r=1,2,3,
\vspace{1.2mm}\notag\\
\alpha\sum_{l=1}^3\frac{\partial^2 \zeta }{\partial x_l^2}-\xi \zeta-\,\beta  \,\sum_{l=1}^3\frac{\partial u_l }{\partial x_l} +\ell&=\varrho_0 \, \varkappa\,\frac{\partial^2 \,\zeta}{\partial\, t^2}.\label{PDE}
\end{align}

Hence, in the case of isotropic materials we have
\begin{align} \sigma &=2\,\mu_{\rm e}\,
\varepsilon+\beta \,\zeta\, \id+\lambda_{\rm e}\, \tr (\varepsilon) \, \id=2\,\mu_{\rm e}\,
\sym \,{\rm D}u+\lambda_{\rm e}\, ({\rm div}\, u)\, \id +\beta \,\zeta\, \id,
\end{align}
while the natural Neumann-boundary conditions are
\begin{align}
\sigma.n=\widetilde{\sigma} \qquad \text{and} \qquad h.n=\widetilde{h},
\end{align}
 where  $h=\alpha\,{\rm D}\zeta$ is the only reminiscent part of the moment stress tensor, describing the internal forces inside the microstructure (the equilibrated  stress vector from the Cowin-Nunziato theory), and $\widetilde{\sigma}$ and $\widetilde{h}$ are given functions.
 
Note that the positivity conditions for the Cowin-Nunziato theory with micro-voids  are \begin{align}\label{const_poz_def}
3\lambda_{\rm e}+2\mu_{\rm e}>0,\qquad \mu_{\rm e}>0, \qquad \xi >0, \qquad \alpha>0, \qquad ({3\lambda_{\rm e}+2\mu_{\rm e}})\,\xi >3\,\beta ^{2}.
\end{align}
while in our  version of the micro-voids   model in dislocation format the positivity conditions are obvious 
\begin{align}\label{const_poz_def0} \mu_{\rm e}>0,\quad\quad\quad 2\mu_{\rm e}+3\lambda_{\rm e}>0,\quad\quad\quad2\mu_{\rm micro}+3\lambda_{\rm micro}>0,\quad\quad\quad a_2>0\,.
\end{align}
In any case, in view of the identifications the systems \eqref{iden}, \eqref{const_poz_def}  and \eqref{const_poz_def0} are equivalent.
Let us remark that 
\begin{align}\label{const_poz_def1}
3\lambda_{\rm e}+2\mu_{\rm e}>0,\qquad \mu_{\rm e}>0, 
\end{align}
are equivalent to
\begin{align}
E  >0, \qquad  -1< \nu<\frac{1}{2},
\end{align}
where $E  =\frac{\mu_{\rm e}\, (3\lambda_{\rm e}+2\mu_{\rm e})}{\lambda_{\rm e}+\mu_{\rm e}}$ and $\nu  =\frac{\lambda_{\rm e}}{2(\lambda_{\rm e}+\mu_{\rm e})}$, represent the Young's modulus and the Poisson's ratio, respectively.

In the following we assume $\varrho_0>0$ and $\varkappa>0$ without mentioning these conditions in the hypothesis of our results.

Note that in the relaxed micromophic model for $P=\zeta\!\!\cdot\!\! \id$  we have only four parameters, because  $2\mu_{\rm micro}+3\lambda_{\rm micro}$ can be regarded as a single parameter, instead of five considered by Cowin and Nunziato \cite{CowinNunziato83}. Letting  $\mu_{\rm micro}\rightarrow+\infty$ and $\kappa_{\rm micro}\rightarrow+\infty$ (or $\mu_{\rm micro}\rightarrow+\infty$ and $\lambda_{\rm micro}>\mathrm{const}$) we have that $\mu_{\rm e}\rightarrow\mu_{\rm macro}$, $\kappa_{\rm e}\rightarrow\kappa_{\rm macro}$, $\lambda_{\rm e}\rightarrow\lambda_{\rm macro}$ and that the internal energy density remains bounded only for vanishing $\zeta$. In this limit case the total energy is \begin{align}
\mathcal{L}_{\rm macro}=\int_\Omega\bigg(&\frac{1}{2}\varrho_0\|u_{,t}\|^2+\mu_{\rm macro}\,\| \sym\, {\rm D}u \|^2+ \frac{\lambda_{\rm macro}}{2}\, \tr({\rm D}u)^2\bigg)\, dv,
\end{align}
i.e., we arrive in a transparent way from the relaxed micromorphic model to the framework of  classical linear elasticity, via the model of micro-voids.

Regarding the existence of real plane waves, we will not use yet the general conditions for real planar waves (if and only if  in 
\eqref{relrw}), since due to the simplification of the system of partial differential equations some other cases may occur in the micro-voids model.

\section{Real plane waves in the model of isotropic materials with voids  }\setcounter{equation}{0}
We say that there exists  {\it real plane waves}  in the  direction $m=(m_1,m_2,m_3)$, $\lVert m\rVert^2=1$,  if for every  wave number $k>0$  the system of partial differential equations \eqref{PDE} admits a solution in the form
\begin{align}\label{ansatzwp}
u(x_1,x_2,x_3,t)&=\underbrace{\begin{footnotesize}\begin{pmatrix}\widehat{u}_1\\\widehat{u}_2\\\widehat{u}_3\end{pmatrix}\end{footnotesize}}_{=:\,\widehat{u}}
\, e^{{\rm i}\, \left(k\langle m,\, x\rangle_{\mathbb{R}^3}-\,\omega \,t\right)}\,,\\ \zeta (x_1,x_2,x_3,t)&={\rm i}\,\widehat{\zeta} e^{{\rm i}\, \left(k\langle m,\, x\rangle_{\mathbb{R}^3}-\,\omega \,t\right)},\quad 
\widehat{u}\in\mathbb{C}^{3}, \quad \widehat{ \zeta   }\in\mathbb{C}, \quad (\widehat{u}, \widehat{ \zeta   })^T\neq 0\,,\notag
\end{align}
only for  real frequencies $\omega\in \mathbb{R}$, where ${\rm i}\,=\sqrt{-1}$ is the complex unit.  The plane wave is called ``real'' since it is defined by real values of $\omega$. The speed of propagation is given by $v=\frac{\omega}{k}$. For the definition of $\zeta$ we take ${\rm i}\,\widehat{\zeta }$  since this choice will lead us in the end to deal  only with real valued matrices. 

From \eqref{PDE} and \eqref{ansatzwp}  it follows  that $\widehat{u}_1, \widehat{u}_2, \widehat{u}_3$ and $\widehat{\zeta }$ have to be the solutions of the following linear algebraic system
\begin{align}\label{algPDE}
-\omega^2\varrho_0 \, \widehat{u}_r&=-k^2\mu_{\rm e}\,\dd {\widehat{u}_r} -k^2\,(\mu_{\rm e}+\lambda_{\rm e})\dd\sum_{l=1}^3\widehat{u}_l\, m_l\,m_r -\beta \,k\,\widehat{\zeta }\,m_r,
\vspace{1.2mm}\qquad  r=1,2,3,\\
-{\rm i}\,\omega^2\varrho_0 \,\varkappa\, \widehat{ \zeta }&= -{\rm i}\,\alpha\,k^2\,\widehat{\zeta }-{\rm i}\,\xi \,\widehat{ \zeta } -{\rm i}\,\beta \,k\,\sum_{l=1}^3\widehat{u}_l \,m_l. \notag
\end{align}

Since our formulation is for isotropic materials, by  demanding real plane waves in any direction $m=(m_1,m_2,m_3)$, $\lVert m\rVert=1$, it is equivalent to demand real plane waves with $m_1=1$ and $m_2=m_3=0$. In this setting the system \eqref{algPDE} reads
\begin{align}\label{ansatzwp1}
-\omega^2\varrho_0 \, \widehat{u}_1&=-k^2\mu_{\rm e}\,\dd {\widehat{u}_1} -k^2\,(\mu_{\rm e}+\lambda_{\rm e})\dd\widehat{u}_1 -\beta \,k\,\widehat{\zeta },
\vspace{1.2mm}\notag\\
-\omega^2\varrho_0 \, \widehat{u}_2&=-k^2\mu_{\rm e}\,\dd {\widehat{u}_2}, \vspace{1.2mm}\\
-\omega^2\varrho_0 \, \widehat{u}_3&=-k^2\mu_{\rm e}\,\dd {\widehat{u}_3}, \vspace{1.2mm}\notag\\
-\omega^2\varrho_0 \,\varkappa\, \widehat{ \zeta }&= -\alpha\,k^2\,\widehat{\zeta }-\xi \,\widehat{ \zeta } -\beta \,k\,\widehat{u}_1. \notag
\end{align}   

We split the above system in the following two systems
\begin{align}\label{syst1}
\left[ {Q}_1(k)-\omega^2\widehat{\id}_2\,\right] 
\begin{footnotesize}
\begin{pmatrix} \widehat{u}_1 \\
\widehat{\zeta} \end{pmatrix}\end{footnotesize}=0 ,
\end{align} 
and 
\begin{align}\label{syst2}
\left(\frac{\mu_{\rm e}\,k^2}{\varrho_0 }-\omega^2\,\right) \widehat{u}_2=0, \qquad 
\left(\frac{\mu_{\rm e}\,k^2}{\varrho_0 }-\omega^2\,\right) \widehat{u}_3=0, \vspace{1.2mm}
\end{align}   
where
\begin{align}
 { {Q}_1}(k)&=\begin{footnotesize}\begin{pmatrix}
(\lambda_{\rm e}+2\, \mu_{\rm e} )k^2& \beta \,k\vspace{2mm}\\
\beta \,k & \alpha\,k^2+\xi 
\end{pmatrix}\end{footnotesize},\qquad
\widehat{\id}_2=\begin{footnotesize}\begin{pmatrix} \varrho_0  & 0 \\
0 & \varrho_0 \,\varkappa\, \end{pmatrix}\end{footnotesize}.
\end{align}

In order to have an eigenvalue problem, we multiply \eqref{syst1} from the left with $\widehat{\id}_2^{-1/2}$ and make a substitution to obtain
\begin{align}\label{syst1_2}
\left[\widehat{\id}_2^{-1/2} { {Q}_1}(k)\widehat{\id}_2^{-1/2}-\omega^2\id\right]\,  {d}=0,\qquad\qquad   {d}=\widehat{\id}_2^{1/2}\begin{footnotesize}\begin{pmatrix}
\widehat{u}_1 \\ \widehat{ \zeta }
\end{pmatrix}\end{footnotesize}.
\end{align}
Hence, the system \eqref{syst1} is equivalent to the eigenvalue problem
\begin{align}\label{syst1_3}
\left[\widetilde{ {Q}}(k)-\omega^2\,\id\,\right] {d} 
=0 ,\qquad\qquad \text{where}\qquad \qquad  
\widetilde{ {Q}}(k)=\widehat{\id}_2^{-1/2} { {Q}_1}(k)\widehat{\id}_2^{-1/2}\in\Sym(2).
\end{align} 

The system \eqref{syst2} admits a non-zero solution if and only if $\omega^2=k^2\,\frac{\mu_{\rm e}}{\varrho_0 }$, while the system \eqref{syst1} (i.e., \eqref{syst1_3}) has a non-zero solution if and only if $\det{\left[\widetilde{ {Q}}(k)-\omega^2\,\id\,\right]}=0$. Having real waves implies that
\begin{align}
\mu_{\rm e}>0
\end{align} and that $\omega^{2}$ are positive real eigenvalues of $\widetilde{Q}$, i.e.,  
\begin{align}
\widetilde{ { {Q}}}(k)&=\begin{footnotesize}\begin{pmatrix}
\frac{(\lambda_{\rm e}+2\, \mu_{\rm e} )k^2}{\varrho_0 } & \frac{\beta \,k}{\varrho_0 \,\sqrt{\varkappa}}\vspace{2mm}\\
\frac{\beta \,k}{\varrho_0 \,\sqrt{\varkappa}} & \frac{\alpha\,k^2+\xi }{\varrho_0 \,\varkappa}
\end{pmatrix}\end{footnotesize}
\end{align}
must be positive definite.
But $\widetilde{ {Q}}(k)$ is  positive definite if and only if  \begin{align}\label{rwc}\lambda_{\rm e}+2\mu_{\rm e}>0, \qquad \alpha>0,\qquad  (\lambda_{\rm e}+2\mu_{\rm e})\,\xi >\beta ^2.\end{align}

In conclusion, we have obtained:
\begin{proposition} There exists  {\it real plane waves}  in the linear model of isotropic elastic materials   with micro-voids     if and only if  
\begin{align}\label{rwc1}\mu_{\rm e}>0, \qquad \lambda_{\rm e}+2\mu_{\rm e}>0, \qquad \alpha>0,\qquad  (\lambda_{\rm e}+2\mu_{\rm e})\xi >\beta ^2.\end{align} 
\end{proposition}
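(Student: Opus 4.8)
The plan is to follow the spectral reduction already prepared in the excerpt and to convert the requirement ``plane-wave solutions exist only for real $\omega$'' into an algebraic positivity statement for a $2\times 2$ symmetric matrix. First I would insert the plane-wave ansatz \eqref{ansatzwp} into the field equations \eqref{PDE}, obtaining the algebraic amplitude system \eqref{algPDE}. Invoking isotropy, it is enough to analyse a single propagation direction, say $m=(1,0,0)$, which reduces the problem to \eqref{ansatzwp1}. The key structural observation is that this $4\times4$ system decouples: the two transverse components $\widehat u_2,\widehat u_3$ obey the scalar relations \eqref{syst2} and are completely separated from the coupled longitudinal--void pair $(\widehat u_1,\widehat\zeta)$ governed by \eqref{syst1}.

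For the transverse (shear) block \eqref{syst2}, a nontrivial amplitude forces $\omega^2=\mu_{\rm e}k^2/\varrho_0$; since $\varrho_0>0$ and $k>0$, this value is real and positive for every wave number exactly when $\mu_{\rm e}>0$, which produces the first inequality in \eqref{rwc1}. For the coupled pair I would symmetrize \eqref{syst1}: left-multiplying by $\widehat{\id}_2^{-1/2}$ and changing variables to $d=\widehat{\id}_2^{1/2}(\widehat u_1,\widehat\zeta)^T$ turns it into the standard eigenvalue problem \eqref{syst1_3} with the real symmetric matrix $\widetilde{Q}(k)=\widehat{\id}_2^{-1/2}Q_1(k)\widehat{\id}_2^{-1/2}$. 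Because $\widetilde{Q}(k)$ is real symmetric, its two eigenvalues $\omega^2$ are automatically real, so the genuine content of ``real waves'' is that these eigenvalues be \emph{positive}; equivalently, that $\widetilde{Q}(k)$ be positive definite.

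It then remains to characterize positive definiteness of $\widetilde{Q}(k)$ by Sylvester's criterion applied to its leading principal minors. The $(1,1)$ entry is positive for $k>0$ if and only if $\lambda_{\rm e}+2\mu_{\rm e}>0$; computing $\det\widetilde{Q}(k)$ and clearing the positive factor $k^2/(\varrho_0^2\,\varkappa)$ leaves the requirement $(\lambda_{\rm e}+2\mu_{\rm e})(\alpha k^2+\xi)-\beta^2>0$. This is the step where the $k$-dependence must be handled carefully and is the main, though essentially routine, obstacle: I would read this quadratic-in-$k$ inequality as a condition that must hold for \emph{every} $k>0$. Its leading coefficient $(\lambda_{\rm e}+2\mu_{\rm e})\,\alpha$ controls the behaviour as $k\to\infty$ and, together with $\lambda_{\rm e}+2\mu_{\rm e}>0$, forces $\alpha>0$, while the $k\to0^+$ limit forces $(\lambda_{\rm e}+2\mu_{\rm e})\,\xi\geq\beta^2$; conversely, once $\lambda_{\rm e}+2\mu_{\rm e}>0$, $\alpha>0$ and $(\lambda_{\rm e}+2\mu_{\rm e})\,\xi>\beta^2$ are assumed, monotonicity in $k^2$ yields strict positivity of the determinant for all $k>0$.

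Assembling the shear condition $\mu_{\rm e}>0$ with the three inequalities extracted from $\widetilde{Q}(k)$ produces exactly \eqref{rwc1}. Since each implication in the chain (ansatz $\Rightarrow$ decoupling $\Rightarrow$ symmetrization $\Rightarrow$ positive definiteness $\Rightarrow$ minor inequalities) is reversible, the characterization is a genuine equivalence, which is the asserted ``if and only if''.
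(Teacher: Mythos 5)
Your proposal reproduces the paper's own proof step for step: the same plane-wave ansatz inserted into \eqref{PDE}, the same reduction by isotropy to $m=(1,0,0)$, the same decoupling into the shear block \eqref{syst2} (yielding $\mu_{\rm e}>0$) and the coupled longitudinal--void block \eqref{syst1}, the same symmetrization by $\widehat{\id}_2^{-1/2}$, and the same conclusion that ``real waves'' amounts to positive definiteness of $\widetilde{Q}(k)$, checked by leading principal minors. So in structure this is exactly the paper's argument.

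One caveat, which your write-up half-acknowledges and then papers over with the final ``each implication is reversible'' sentence: the limiting arguments in $k$ only deliver non-strict inequalities. From $(\lambda_{\rm e}+2\mu_{\rm e})(\alpha k^2+\xi)>\beta^2$ for all $k>0$, the behaviour as $k\to\infty$ forces only $\alpha\ge 0$ (not $\alpha>0$, as you claim), and the limit $k\to 0^+$ forces only $(\lambda_{\rm e}+2\mu_{\rm e})\,\xi\ge\beta^2$, as you yourself write. These cannot be upgraded to the strict inequalities of \eqref{rwc1} by reversibility: for instance $\alpha=0$ together with $(\lambda_{\rm e}+2\mu_{\rm e})\,\xi>\beta^2$, or $\alpha>0$ together with $(\lambda_{\rm e}+2\mu_{\rm e})\,\xi=\beta^2$, keeps $\widetilde{Q}(k)$ positive definite for every $k>0$, so the literal ``only if'' toward \eqref{rwc1} requires these degenerate boundary cases to be excluded separately (e.g.\ by the modelling requirement $a_2>0$, i.e.\ $\alpha>0$, of the paper's Remark 2.1). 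To be fair, the paper's own proof is no more careful here --- it simply asserts that $\widetilde{Q}(k)$ is positive definite if and only if \eqref{rwc} --- so your proposal matches the published argument in both approach and level of rigor, but if you want a watertight ``only if'' you should state explicitly how the two boundary cases are ruled out.
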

In terms of the constitutive parameters of the relaxed micromorphic model the conditions \eqref{rwc1} read
\begin{align}\label{rwc2}
\mu_{\rm e}>0, \qquad \lambda_{\rm e}+2\mu_{\rm e}>0, \qquad a_2>0,\qquad  (\lambda_{\rm e}+2\mu_{\rm e})(\kappa_{\rm e}+\kappa_{\rm micro})>\kappa_{\rm e}^2.
\end{align}
As we observe, comparing to the analogue conditions for the parental relaxed micromorphic model, it follows that 
\begin{align}
\kappa_{\rm e}+\kappa_{\rm micro}>0.
\end{align}
In the micro-voids framework the real plane waves existence do not imply that $\mu_{\rm micro}>$ or $\mu_{\rm e}+\mu_{\rm micro}>0$. 

The conditions \eqref{rwc1} are equivalent with the \emph{strong ellipticity} \cite{ChiritaGhiba1} 
of the elastic material with micro-voids, i.e., with the condition
\begin{equation}
{W}\left(  m\otimes\eta,\theta,{\vartheta}\right)  >0,\text{ \ \ for all
}\ \ \ m,\eta,\vartheta\in\mathbb{R}^3 \setminus\{{0}\}, \theta\in \mathbb{R},
\label{3.1}%
\end{equation}
where
\begin{align}
{W}\left(  X,c,b\right)=\mu_{\rm e}\,\| \sym X \|^2+ \frac{\lambda_{\rm e}}{2}\, \tr(X)^2+\frac{\xi }{2}\,c^2+\frac{\alpha}{2}\,\|b\|^2+b \,\tr(X)\,c.
\end{align}

Moreover, for an isotropic elastic material with micro-voids, the progressive wave (\ref{ansatzwp}) can
propagate in the direction $m=(m_1,m_2,m_3)\neq 0$ with the speeds $v_{1}=v_{2}$, $v_{3}$ and $v_{4}$, see \cite{ChiritaGhiba1}, where%
\begin{align}\label{4.8}
v_{1}^{2}&=v_{2}^{2}=v_t^2:=\frac{\mu_{\rm e}}{{ \varrho_0}}, %
\notag\\\dd
v_{3}^{2}&=\frac{1}{2{ \varrho_0}\varkappa}\left\{
\xi+\alpha+\varkappa\left( \lambda_{\rm e}+2\mu_{\rm e}\right) +\sqrt{\left[
	\xi+\alpha-\varkappa \left( \lambda_{\rm e}+2\mu_{\rm e}\right)  \right]
	^{2}+4\,\varkappa{\beta }
	^{2}}\right\}, \vspace{1mm}\\
\dd v_{4}^{2}&=\frac{1}{2{ \varrho_0}\varkappa}\left\{
\xi+\alpha+\varkappa\left( \lambda_{\rm e}+2\mu_{\rm e}\right) -\sqrt{\left[
	\xi+\alpha-\varkappa \left( \lambda_{\rm e}+2\mu_{\rm e}\right)  \right]
	^{2}+4\,\varkappa{
\beta }\,
	^{2}}\right\} . \notag\end{align}

 The progressive
waves propagating with speeds $v=v_{1}=v_{2}=\sqrt
{\displaystyle\frac{\mu_{\rm e}}{{ \varrho_0}}} $ are transverse waves and  the characteristic space (the solutions of \eqref{algPDE}) is generated by the
linear independent vectors 
$
\widehat{u}^{\left(  1\right)  }=\left(
-m_{3},0,m_{1}\right)  
$  \text{and} $\qquad  \widehat{u}^{\left(  2\right)
}=\left(  -m_{2},m_{1},0\right) .
$
For $v=v_{3}$ and $v=v_4$ the progressive waves are longitudinal waves with the corresponding solution  of the algebraic system
\eqref{algPDE}
given by 
$
\widehat{u}^{(3)} =\left(
	m_{1}\mathfrak{c},m_{2}\mathfrak{c},m_{3}\mathfrak{c},{\beta } \right) , $
\text{and}
$
\widehat{u}^{(4)} =\left(
	m_{1}\mathfrak{a},m_{2}\mathfrak{a},m_{3}\mathfrak{a},{\beta } \right), $
respectively, where
$
\mathfrak{c}={ \varrho_0}\varkappa v_{3}^{2}-\xi -\alpha$  \text{and} $
\mathfrak{a}={ \varrho_0}\varkappa v_{4}^{2}-\xi -\alpha. %
$

In the next sections we show that there exists a unique Rayleigh wave solution for all isotropic elastic materials   with micro-voids    satisfying \eqref{rwc1}, i.e., in the same range of values for the constitutive coefficients for which real progressive plane waves exist.

\section{Rayleigh waves in isotropic elastic materials with micro-voids }\label{setRw}\setcounter{equation}{0}

In this section we consider that the material   with micro-voids     occupying
the half-space 
 $$\Sigma:=\{(x_1,x_2,x_3)\,|\,x_1,x_3\in \mathbb{R},\,\, x_2\geq0\}$$  is homogeneous and isotropic and its  boundary  is free of surface traction, i.e.,
\begin{align}\label{03}
{\sigma}.\, n=\sigma_{i3}=0,\qquad \qquad {h}.\, n=h_2=0 \qquad \qquad\textrm{for}\quad x_2=0.
\end{align}

 Without loss of generality we will study the waves propagating
 along the $x_{1}-$axis. 
 In the context of Rayleigh wave propagation, the surface particles move in the planes normal to the surface $x_2=0$ and parallel to the direction of propagation. In consequence, we consider the following  plane strain ansatz as a first step in our process of construction of the solution
 \begin{align}
u= u(x_1,x_2,t)&=\begin{footnotesize}\begin{pmatrix} u_1(x_1,x_2,t)
 \\
 u_2(x_1,x_2,t)
 \\0
 \end{pmatrix}\end{footnotesize},\qquad \qquad  \zeta=\zeta(x_1,x_2,t).\label{ansatz1}
 \end{align}

 Beside the boundary condition \eqref{03} we require that the solutions be attenuated in the direction $x_{2}%
 $, so that they are decaying with distance from the plane surface
 $x_{2}=0$,
 that is we require that
 \begin{align}\label{04} \lim_{x_2 \rightarrow \infty}\{u_1,u_2,{\zeta},\sigma_{12},\sigma_{21},\sigma_{22},h_2\} (x_1,x_2,t)=0 \qquad \quad \forall\, x_1\in\mathbb{R}, \quad \forall \, t\in[0,\infty).
 \end{align}

The further aim of this paper is to give an explicit solution  $(u_1,u_2,\zeta)   $ of the system of partial differential equations which results from \eqref{PDE} by considering the plane strain ansatz \eqref{ansatz1}, i.e., a solution of 
 \begin{align}\label{x6}
 \varrho_0 \, \frac{\partial^2{u}_{1}}{ \partial \,t^2}&=(\lambda_{\rm e}+2\,\mu_{\rm e})\,\frac{\partial^2 u_{1}}{\partial \,x_1^2}+(\lambda_{\rm e}+\mu_{\rm e}) \,  \frac{\partial^2 u_{2}}{\partial \,x_2\partial\, x_1}+ \mu_{\rm e}\,\frac{\partial^2 u_{1}}{\partial \,x_2^2}+\beta \, \frac{\partial\,\zeta}{\partial \,x_1},\notag\\
\varrho_0 \, \frac{\partial^2{u}_{2}}{\partial \,t^2}&=\mu_{\rm e}\,\frac{\partial^2 u_{2}}{\partial \,x_1^2}+(\lambda_{\rm e}+\mu_{\rm e})\,\frac{\partial^2 u_{1}}{\partial \,x_2\,\partial\,x_1}+(\lambda_{\rm e}+2\mu_{\rm e})\,\frac{\partial^2 u_{2}}{\partial \,x_2^2}+\beta \, \frac{\partial\,\zeta}{\partial \,x_2},\\
\varrho_0 \,\varkappa\,\frac{\partial^2 {{\zeta}}}{\partial \,t^2}&=\alpha\,\frac{\partial^2\zeta}{\partial\,x_1^2}+\alpha\,\frac{\partial^2\zeta}{\partial\,x_2^2}-\xi \zeta-\beta \,\frac{\partial\, u_{1}}{\partial \,x_1}-\beta \,\frac{\partial\, u_{1}}{\partial \,x_2},\notag
 \end{align}
 which satisfies the boundary conditions at $x_2=0$
 \begin{align}\label{x17}
 \mu_{\rm e}\,\frac{\partial\,u_{1}}{\partial \,x_2}+\mu_{\rm e}\,\frac{\partial\,u_{2}}{\partial \,x_1}=\,0,\qquad
 (\lambda_{\rm e}+2\mu_{\rm e}) \,\frac{\partial\,u_{2}}{\partial \,x_2}+\lambda_{\rm e} \,  \frac{\partial\,u_{1}}{\partial \,x_1}+\beta \zeta=\,0,\qquad 
 \alpha \,\frac{\partial\,\zeta}{\partial \,x_2}=\,0,
 \end{align}
and has the asymptotic behaviour \eqref{04}.

\section{The algebraic analysis  of a Riccati-type equation}\setcounter{equation}{0}

	Let 
	$  {\mathcal{X}},   {\mathcal{Y}}\in \mathbb{R}^{3\times 3}$ be positive definite  matrices and $  {\mathcal{Z}}\in \mathbb{R}^{3\times 3}$. 
	
	\begin{definition}\label{deflim}By the {\bf limiting speed} associated to 	$  {\mathcal{X}},   {\mathcal{Y}}\in \mathbb{R}^{3\times 3}$  and $  {\mathcal{Z}}\in \mathbb{R}^{3\times 3}$ we understand a speed $\widehat{v}>0$, such that for all wave speeds satisfying 	$0\le v<\widehat{v}$  (subsonic speeds) the solutions $r$ of  \begin{align}\label{x90}
	\det\,[r^2  {\mathcal{X}}+r(  {\mathcal{Z}}+  {\mathcal{Z}}^T)+  {\mathcal{Y}}-k^2 v^2 {\id}]=0,
	\end{align}
	are not real. 
\end{definition}
In other words, if the roots  of the characteristic equation \eqref{x90} are not real then they correspond to wave speeds $v$ satisfying 	$0\le v<\widehat{v}$.

In this paper, we will use a matrix algebraic analysis  of a Riccati-type equation, see \eqref{120},  presented by Fu and Mielke in \cite{fu2001nonlinear,fu2002new} (see also the work by Mielke  and Sprenger \cite{mielke1998quasiconvexity}), i.e.,
\begin{theorem}\label{fmth} {\rm (Fu and Mielke in \cite{fu2001nonlinear,fu2002new})}
	Let 
	$  {\mathcal{X}},   {\mathcal{Y}}\in \mathbb{R}^{3\times 3}$ be positive definite  matrices and $  {\mathcal{Z}}\in \mathbb{R}^{3\times 3}$. 
	\begin{enumerate}
		\item 	If $0\leq v<\widehat{v}$, then the matrix problem 
		\begin{align}\label{13}
		  {\mathcal{X}}{\mathcal{E}}^2- {\rm i}\, (  {\mathcal{Z}}+   {\mathcal{Z}}^T)\mathcal{E}-   {\mathcal{Y}}+ k^2 v^2{\id}=0,\qquad \textrm{\rm Re\,spec\,}  {\mathcal{E}}>0,
		\end{align}
		where $\textrm{\rm Re\,spec\,}  {\mathcal{E}} $  means the real part of spectra of $  {\mathcal{E}}$  has a unique solution for $  {\mathcal{E}}$ and $  {  {\mathcal{M}}}:=-(-	  {\mathcal{X}}  {\mathcal{E}}+ {\rm i}\,  {\mathcal{Z}}^T)$ is Hermitian.

		\item 
		If $0\leq v<\widehat{v}$, then the unique solution $\mathcal{M}$ of the \textit{algebraic Riccati equation} 
		\begin{align}\label{120}
		(  {\mathcal{M}}- {\rm i}\,  {\mathcal{Z}})  {\mathcal{X}}^{-1}(  {\mathcal{M}}+ {\rm i}\, \mathcal{R^T})-  {\mathcal{Y}}+k\, v^2\,{\id}=0, \qquad  {  {\mathcal{M}}}\, y(0)=0.
		\end{align}
		that satisfies  $\text{\rm Re\,spec}\,(   {\mathcal{X}}^{-1}(  {\mathcal{M}}+{\rm i}\,  {\mathcal{Z}}^T))>0$ is given explicitly by
		\begin{align}\label{fmv}
		  {\mathcal{M}}=\Big(\int_{0}^{\pi}	   {\mathcal{X}}_  { \theta} ^{-1}\, d\theta\Big)^{-1}\Big(\pi\id-  {\rm i}\, \int_{0}^{\pi}   {\mathcal{X}}_  { \theta} ^{-1}   {   {\mathcal{Z}}}_\theta  ^T\, d\theta\Big),
		\end{align}
		where we write  $\widetilde{  {\mathcal{Y}}}=  {\mathcal{Y}}-k\,v^2\,{\id}$,  $\theta$ is an arbitrary angle, while  the  matrices $  {\mathcal{X}}_  { \theta}$, $  {\mathcal{Y}}_  { \theta}$ and $  {\mathcal{Z}}_  { \theta}$ are  obtained by rotation of the old coordinate system\footnote{We remark that   $  {\mathcal{X}}$ and $  {\mathcal{Y}}$ remain symmetric and  that
			$\widetilde{  {\mathcal{Y}}}_\theta$, $  {\mathcal{X}}_  { \theta} $ and $  {\mathcal{Z}}_\theta  $ are periodic in $\theta$ with periodicity $\pi$ and
			\begin{align}\label{RTQ}
			\widetilde{  {\mathcal{Y}}}_\theta(\theta+\frac{\pi}{2})=  {\mathcal{X}}_  { \theta} ,\qquad \quad  {\mathcal{Z}}_\theta(\theta+\frac{\pi}{2})=-  {\mathcal{Z}}_\theta  ^T,\qquad\quad    {\mathcal{X}}_  { \theta}(\theta+\frac{\pi}{2})=\widetilde{  {\mathcal{Y}}}_\theta.
			\end{align}
			In addition, according to the definition of the limiting speed, regarding Proposition \ref{lemmaGH}  and  Proposition  \ref{propQp},
			the limiting velocity $\widehat{v}$ is in fact the lowest velocity for which the matrices $\widetilde{  {\mathcal{Y}}}_\theta$ and $T(\theta)$  become singular for some angle $\theta$ and  $\widetilde{  {\mathcal{Y}}}_\theta$ is positive definite for $0\leq v<\widehat{v}$, see the proof of Proposition \ref{lemmaGH} for more details. Hence, from \eqref{RTQ} we have that  $  {\mathcal{X}}_  { \theta} $ is positive definite, too. 
			Thus by the definition of the limiting speed $\widehat{v}$ both $\widetilde{  {\mathcal{Y}}}_\theta$ and $  {\mathcal{X}}_  { \theta} $ are positive definite or positive semi-definite depending on  $\theta$ (for $v=\widehat{v}$ there is at least one $\theta$ at which $  {\mathcal{X}}(\theta)$ has an eigenvalue 0, and likewise $\widetilde{  {\mathcal{Y}}}_\theta$).} about $e_3$ by an angle $\theta$
		\begin{align}\label{21}
		  {\mathcal{X}}_  { \theta} =\cos^2\theta\,  {\mathcal{X}}-\sin\theta\cos\theta\,(  {\mathcal{Z}}+  {\mathcal{Z}}^T)+\sin^2\theta\,\widetilde{  {\mathcal{Y}}},\notag\vspace{2mm}\\
		  {\mathcal{Z}}_\theta  =\cos^2\theta\,  {\mathcal{Z}}+\sin\theta\cos\theta
		\,(  {\mathcal{X}}-\widetilde{  {\mathcal{Y}}})-\sin^2\theta\,  {\mathcal{Z}}^T,\vspace{2mm}\\
		\widetilde{  {\mathcal{Y}}}_\theta=\cos^2\theta\,\widetilde{  {\mathcal{Y}}}+\sin\theta\cos\theta\,(  {\mathcal{Z}}+  {\mathcal{Z}}^T)+\sin^2\theta\,  {\mathcal{X}}.\notag
		\end{align}
		\item 	If $0\leq v<\widehat{v}$, then  the solution  $  {\mathcal{M}}_v$ obtained from \eqref{120} has the following properties
		\begin{enumerate}
			\item $  {\mathcal{M}}_v$ is Hermitian,
			\item  $\frac{d  {  {\mathcal{M}}}_v}{d v}$ is negative definite,
			\item  $\tr(  {  {\mathcal{M}}}_v)\geq 0$, and $\langle  w,  {  {\mathcal{M}}}_v\,w\rangle \geq 0$ for all real vectors $w$ for  all $0\leq v\leq  \widehat{v}$,
			\item $  {\mathcal{M}}_v$ is and positive definite for  all $0\leq v< \widehat{v}$.
		\end{enumerate}
		\item The secular equation
		\begin{align}
		\det  {  {\mathcal{M}}}_v=0,
		\end{align}
		where $  {\mathcal{M}}_v$ is obtained from $\eqref{120}$, has a unique admissible solution $0\leq v<\widehat{v}$.
	\end{enumerate}
\end{theorem}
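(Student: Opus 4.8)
The plan is to follow the structure of Fu and Mielke's argument, which recasts the $x_2$-dependence of the decaying modes as a first-order Stroh-type Hamiltonian system and then exploits the spectral splitting that the subsonic condition $0\le v<\widehat{v}$ provides. Throughout I would write $\widetilde{\mathcal{Y}}=\mathcal{Y}-k^2v^2\,\id$, so that the quadratic equation in (1) reads $\mathcal{X}\mathcal{E}^2-\mathrm{i}\,(\mathcal{Z}+\mathcal{Z}^T)\mathcal{E}-\widetilde{\mathcal{Y}}=0$, and observe that the substitution $\mathcal{E}=\mathcal{X}^{-1}(\mathcal{M}+\mathrm{i}\,\mathcal{Z}^T)$, equivalently $\mathcal{M}=\mathcal{X}\mathcal{E}-\mathrm{i}\,\mathcal{Z}^T$, turns this quadratic into exactly the algebraic Riccati equation \eqref{120}. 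Hence (1) and (2) describe one and the same object $\mathcal{M}$, and I would prove their solvability simultaneously, treating the explicit formula \eqref{fmv} as the genuine additional content of (2).

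For the solvability in (1) I would linearize the matrix polynomial $r\mapsto r^2\mathcal{X}+r(\mathcal{Z}+\mathcal{Z}^T)+\widetilde{\mathcal{Y}}$ of \eqref{x90} into a $6\times6$ companion pencil, whose eigenvalues are the roots $r$ of \eqref{x90} and are related to the eigenvalues $\mu$ of $\mathcal{E}$ by $r=\mathrm{i}\,\mu$; since $\mathcal{X},\widetilde{\mathcal{Y}}$ are real symmetric and $\mathcal{Z}$ is real, these roots occur in complex-conjugate pairs. By the definition of the limiting speed, for $0\le v<\widehat{v}$ none of them is real, so they split into three with $\mathrm{Im}\,r>0$ (decaying, i.e. $\mathrm{Re}\,\mu>0$) and three with $\mathrm{Im}\,r<0$. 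A solvent $\mathcal{E}$ with $\mathrm{Re}\,\mathrm{spec}\,\mathcal{E}>0$ is then the one whose spectrum is the decaying triple, and its existence and uniqueness follow from the standard spectral-factorization theory of matrix polynomials, the leading coefficient $\mathcal{X}$ being invertible. The Hermitian property of $\mathcal{M}$ I would establish structurally: taking the conjugate transpose of \eqref{120} and using that $\mathcal{X},\mathcal{Y}$ are symmetric and $\mathcal{Z}$ real shows that $\mathcal{M}^{*}$ solves the \emph{same} Riccati equation, so it suffices to verify that $\mathcal{M}^{*}$ also satisfies $\mathrm{Re}\,\mathrm{spec}\,(\mathcal{X}^{-1}(\mathcal{M}^{*}+\mathrm{i}\,\mathcal{Z}^T))>0$, whence uniqueness forces $\mathcal{M}^{*}=\mathcal{M}$. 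This last verification is the main obstacle: it is equivalent to the statement that the stable Lagrangian subspace is isotropic for the conserved Hermitian (symplectic) form $J$ of the Stroh system, which I would prove by noting that $\langle\mathbf{w}^{(1)},J\,\mathbf{w}^{(2)}\rangle$ is constant in $x_2$ and vanishes as $x_2\to+\infty$ for any two decaying solutions.

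For the explicit representation in (2) I would use the rotation-and-integrate device. Rotating the frame about $e_3$ by an angle $\theta$ replaces $(\mathcal{X},\mathcal{Z},\widetilde{\mathcal{Y}})$ by $(\mathcal{X}_\theta,\mathcal{Z}_\theta,\widetilde{\mathcal{Y}}_\theta)$ as in \eqref{21}, which are $\pi$-periodic and obey the shift relations \eqref{RTQ}. Differentiating the rotated Riccati equation in $\theta$ and inserting \eqref{21} shows that the associated impedance $\mathcal{M}(\theta)$ satisfies a first-order matrix differential equation in $\theta$ which, after multiplication by $\mathcal{X}_\theta^{-1}$, becomes an exact derivative; integrating over the half-period $[0,\pi]$ and using \eqref{RTQ} to close the integrated identity yields, upon solving the resulting linear relation for $\mathcal{M}=\mathcal{M}(0)$, precisely the formula \eqref{fmv}. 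Carrying out this integration cleanly is the second hard step. The positive definiteness of $\widetilde{\mathcal{Y}}_\theta$, and hence of $\mathcal{X}_\theta$, for $0\le v<\widehat{v}$ (guaranteed by the definition of $\widehat{v}$ together with \eqref{RTQ}) ensures that $\int_0^\pi\mathcal{X}_\theta^{-1}\,d\theta$ is invertible, so that \eqref{fmv} is well defined.

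Finally, for the properties (3) and the secular equation (4): part (a) is the Hermitian property already obtained. For (b) I would differentiate \eqref{120} in $v$; writing $A=\mathcal{X}^{-1}(\mathcal{M}+\mathrm{i}\,\mathcal{Z}^T)$ and using $\mathcal{M}^{*}=\mathcal{M}$, $\mathcal{Z}^{*}=\mathcal{Z}^T$ one obtains the Lyapunov equation $A^{*}\,\frac{d\mathcal{M}_v}{dv}+\frac{d\mathcal{M}_v}{dv}\,A=-2kv\,\id$, whose unique solution, since $\mathrm{Re}\,\mathrm{spec}\,A>0$, is $\frac{d\mathcal{M}_v}{dv}=-2kv\int_0^\infty e^{-A^{*}t}e^{-At}\,dt$, which is negative definite for $v>0$. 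Properties (c) and (d) then follow by combining this strict monotonicity with the positive definiteness of $\mathcal{M}_0$ read off from \eqref{fmv} at $v=0$, together with a continuity argument as $v\uparrow\widehat{v}$. For (4), the map $v\mapsto\mathcal{M}_v$ is continuous and strictly decreasing in the quadratic-form sense, so each eigenvalue of the Hermitian matrix $\mathcal{M}_v$ is strictly decreasing; starting from the positive definite $\mathcal{M}_0$ and using the loss of definiteness forced as $v\uparrow\widehat{v}$, the smallest eigenvalue reaches zero at exactly one speed, giving a unique admissible root of $\det\mathcal{M}_v=0$ in $[0,\widehat{v})$, with strict monotonicity excluding any further admissible root.
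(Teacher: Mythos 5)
You should first note a structural fact: the paper itself contains \emph{no proof} of Theorem \ref{fmth} --- it is imported verbatim from Fu and Mielke \cite{fu2001nonlinear,fu2002new,mielke2004uniqueness} as a black box, so there is no internal argument to compare against. Measured against the cited sources, your reconstruction follows their route faithfully: the substitution $\mathcal{M}=\mathcal{X}\mathcal{E}-\mathrm{i}\,\mathcal{Z}^T$ identifying \eqref{13} with the Riccati equation \eqref{120}; the $6\times 6$ linearization of \eqref{x90} with the subsonic $3+3$ splitting of non-real roots (so the stable solvent exists and is unique by spectral factorization, $\mathcal{X}$ being invertible); Hermitian-ness of $\mathcal{M}$ from the conjugate-transpose symmetry of \eqref{120} plus uniqueness, with admissibility of $\mathcal{M}^{*}$ secured by the conserved-flux (isotropic stable subspace) argument; the rotation identity \eqref{21}--\eqref{RTQ} integrated over $[0,\pi]$ to produce \eqref{fmv}; and the Lyapunov equation $A^{*}\,\frac{d\mathcal{M}_v}{dv}+\frac{d\mathcal{M}_v}{dv}\,A=-2k^2v\,\id$ with $\mathrm{Re}\,\mathrm{spec}\,A>0$ for item (3b). (Two small repairs: \eqref{120} carries the paper's typo $k\,v^2$ for $k^2v^2$, which you inherit in your right-hand side $-2kv\,\id$; and $\frac{d\mathcal{M}_v}{dv}$ vanishes at $v=0$, so negative definiteness holds on $0<v<\widehat{v}$, or one should differentiate in $v^2$.)

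The one genuine gap is the \emph{existence} half of item 4. Strict monotonicity of the Hermitian family $v\mapsto\mathcal{M}_v$ together with $\mathcal{M}_0>0$ yields only that $\det\mathcal{M}_v=0$ has \emph{at most} one root in $[0,\widehat{v})$; your phrase ``the loss of definiteness forced as $v\uparrow\widehat{v}$'' is asserted, not proved, and nothing you have written forces an eigenvalue of $\mathcal{M}_v$ to reach zero by the limiting speed --- properties (3c)--(3d) as stated are perfectly consistent with $\mathcal{M}_v$ remaining positive definite up to $v=\widehat{v}$. This is precisely where the cited works invest their real effort: one must analyze the representation \eqref{fmv} as $v\to\widehat{v}$, where $\mathcal{X}_\theta$ degenerates at some transonic angle $\theta_0$, show that $\int_0^\pi \mathcal{X}_\theta^{-1}\,d\theta$ blows up along the null direction so that $\langle w,\mathcal{M}_{\widehat{v}}\,w\rangle\leq 0$ for a suitable real vector $w$ (this is also the content, and the purpose, of the otherwise curious-looking statement (3c) at $v=\widehat{v}$), and only then conclude by continuity and strict decrease that the smallest eigenvalue crosses zero exactly once. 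Indeed, in full anisotropic generality existence can fail at exceptional transonic states (Barnett--Lothe), so a proof of item 4 as stated cannot avoid this limit analysis or a verification specific to the concrete matrices $\mathcal{T},\mathcal{R},\mathcal{Q}$ of the micro-voids model; your sketch should either import that analysis from \cite{fu2002new,mielke2004uniqueness} explicitly or carry it out for \eqref{nTQ}.
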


\section{The ansatz for the solution and the limiting speed}\label{anzsec}\setcounter{equation}{0}

\subsection{The ansatz}

We look for a solution of \eqref{x6} and \eqref{x17} having the form\footnote{We take ${\rm i}\,z_3$ since this choice leads us, in the end, only to deal with real matrices.}
 \begin{align}\label{x5}
 \mathcal{U}(x_1,x_2,t)=\begin{footnotesize}\begin{pmatrix}u_1(x_1,x_2,t)
 	\\
 	u_2(x_1,x_2,t)
 	\\ \zeta(x_1,x_2,t)
 	\end{pmatrix}\end{footnotesize}={\rm Re}\left[\begin{footnotesize}\begin{pmatrix} z_1(x_2)
 	\\
 	z_2(x_2)
 	\\{\rm i}\,z_3(x_2)
 \end{pmatrix}\end{footnotesize} e^{ {\rm i}\, k\, (  x_1-vt)}\right],
 \end{align}
 where  $v$ is the propagation speed {(the phase  velocity)}. If the Cauchy problem given by the following system
 \begin{align}&
\begin{footnotesize}\begin{pmatrix} 
\mu_{\rm e} &0	&0
 	\\
 	0& \lambda_{\rm e}+2\,\mu_{\rm e} &0
 	\\
0 & 0 & \alpha \end{pmatrix}\end{footnotesize}\begin{footnotesize}\begin{pmatrix}	z_1''(x_2)
 	\\
 	z_2''(x_2)
 	\\\	z_3''(x_2)
 	\end{pmatrix}\end{footnotesize}+ {\rm i}\, \begin{footnotesize}\begin{pmatrix} 0& k\,(\lambda_{\rm e}+\mu_{\rm e} )& 0  
 	\\
 	k\,(\mu_{\rm e} +\lambda_{\rm e})& 0 & \beta 
 	\\
0 & \beta  & 0 \end{pmatrix}\end{footnotesize}\begin{footnotesize}\begin{pmatrix} 	z_1'(x_2)
 	\\
 	z_2'(x_2)
 	\\\	z_3'(x_2)
 	\end{pmatrix}\end{footnotesize}\notag\vspace{2mm}\\&\qquad\qquad \ \ -\begin{footnotesize}\begin{pmatrix} 
k^2\,(2\,\mu_{\rm e}+\lambda_{\rm e})-\varrho_0  \,k^2v^2 & 0	& \beta \,k
 	\\
0 & k^2\,\mu_{\rm e}-\varrho_0  \,k^2v^2 &0
 	\\
\beta \,k & 0 & k^2\,\alpha+ \xi  -\varrho_0 \,\varkappa\, k^2v^2 \end{pmatrix}\end{footnotesize}\begin{footnotesize}\begin{pmatrix} 	z_1(x_2)
 	\\
 	z_2(x_2)
 	\\	z_3(x_2)
 	\end{pmatrix}\end{footnotesize}=0,
 \end{align}
 and the boundary conditions
 \begin{align}
\begin{footnotesize}\begin{pmatrix} \mu_{\rm e} &0	&0
 	\\
 	0& 2\,\mu_{\rm e} +\lambda_{\rm e} & 0
 	\\
 	0 & 0 & \alpha \end{pmatrix}\end{footnotesize}\begin{footnotesize}\begin{pmatrix} 	z_1'(0)
 	\\
 	z_2'(0)
 	\\	z_3'(0)
 	\end{pmatrix}\end{footnotesize}+ {\rm i}\, \begin{footnotesize}\begin{pmatrix} 			0 & k\,\mu_{\rm e} & 0  
 	\\
 	k\,\lambda_{\rm e} & 0 & \beta 
 	\\
 	0 & 0 & 0  \end{pmatrix}\end{footnotesize}\begin{footnotesize}\begin{pmatrix}	z_1(0)
 	\\
 	z_2(0)
 	\\	z_3(0)
 \end{pmatrix}\end{footnotesize}=0,
 \end{align}\normalsize
has the solution $z_i$, $i=1,2,3$, where $\cdot '$ denotes the derivative with respect to $x_2$, then $\mathcal{U}$ given by the ansatz \eqref{x5} satisfies \eqref{x6} and \eqref{x17}. In  a more compact notation, the above equations reads
\begin{align}\label{11}
\frac{1}{k^2}\,{ {T}}\,z''(x_2)+ {\rm i}\,\frac{1}{k}\, ({ {R}}+{ {R}}^T)\,z'(x_2)-{ {Q}}\,z(x_2)+k^2\, v^2 \, \hat\id\,z(x_2)=\,0,\qquad 
\frac{1}{k^2}\,{ {T}}\,z'(0)+ {\rm i}\, \frac{1}{k}\,{ {R}}^T\,z(0)=&\,0,
\end{align} 
where the matrices  ${ {T}}\,\,,{ {R}}, \, {Q}$ and $\widehat{\id}^{-\frac{1}{2}}$  are defined by
\begin{align}\label{x47}
 { {T}}=k^2\begin{footnotesize}\begin{pmatrix}  \mu_{\rm e} &0	&0
 	\\
 	0& 2\,\mu_{\rm e}+\lambda_{\rm e} & 0
 	\\
 	0 & 0 & \alpha \end{pmatrix}\end{footnotesize},
 	\qquad  { {R}}=k\begin{footnotesize}\begin{pmatrix}
   0& k\lambda_{\rm e} & 0
 	\\
 	k\,\mu_{\rm e} & 0 & 0
 	\\
 	0 & \beta  & 0 \end{pmatrix}\end{footnotesize},\hspace{1.5cm}\\
 { {Q}}=\begin{footnotesize}\begin{pmatrix} 
 k^2\,(2\,\mu_{\rm e}  +\lambda_{\rm e}) & 0 & \beta \,k
 	\\
 	0 & k^2\,\mu_{\rm e} & 0
 	\\
 	\beta \,k & 0 & k^2\,\alpha+\xi  \end{pmatrix}\end{footnotesize},
 	\qquad
\widehat{\id}^{-1/2}=\begin{footnotesize}\begin{pmatrix} 
\frac{1}{\sqrt{\varrho_0 }} & 0 & 0 \\
0 & \frac{1}{\sqrt{\varrho_0 }} & 0 \\
0 & 0 & \frac{1}{\sqrt{\varrho_0 \,\varkappa}}\end{pmatrix}\end{footnotesize}.\notag
 \end{align}
To find  the  solution $z(x_2)$ of \eqref{11} is equivalent to finding a  solution $y(x_2):=\widehat{\id}^{1/2}\,z(x_2)$ of 
\begin{align}\label{11t}
 \frac{1}{k^2}\,\widehat{\id}^{-1/2}\,{ {T}}\,\widehat{\id}^{-1/2}\,y''(x_2)+ {\rm i}\,\frac{1}{k}\, \widehat{\id}^{-1/2}\,({ {R}}+{ {R}}^T)\widehat{\id}^{-1/2}\,y'(x_2) -\widehat{\id}^{-1/2}\,{ {Q}}\,\widehat{\id}^{-1/2}\,y(x_2)+k^2\, v^2 \, \id\,y(x_2)&=\,0,\\
  \frac{1}{k^2}\,\widehat{\id}^{-1/2}\,{ {T}}\,\widehat{\id}^{-1/2}\,y'(0)+ {\rm i}\, \frac{1}{k}\,\widehat{\id}^{-1/2}\,{ {R}}^T\,\widehat{\id}^{-1/2}\,y(0)&=\,0.\notag
 \end{align}

With the help of the modified matrices
 \begin{align}\label{nTQ}
   {\mathcal{T}}:=\widehat{\id}^{-1/2}\,{ {T}}\,\widehat{\id}^{-1/2}:=\,k^2\begin{footnotesize}\begin{pmatrix} 
 \frac{\mu_{\rm e}}{\varrho_0 } &0	&0
 \\
 0 & \frac{2\,\mu_{\rm e} +\lambda_{\rm e}}{\varrho_0 }  & 0
 \\
 0  & 0 & \frac{\alpha}{\varrho_0 \,\varkappa} \end{pmatrix}\end{footnotesize}, \qquad 
  {\mathcal{R}}:=\widehat{\id}^{-1/2}\,{ {R}}\,\widehat{\id}^{-1/2}:= k\,\begin{footnotesize}\begin{pmatrix} 
0 & k\,\frac{\lambda_{\rm e} }{\varrho_0 }& 0
 \\
 k\,\frac{\mu_{\rm e}}{\varrho_0 }&0  &0
 \\
 0  & \frac{\beta }{\varrho_0 \sqrt{\varkappa}} & 0 \end{pmatrix}\end{footnotesize},
 \\
  {\mathcal{Q}}:=\widehat{\id}^{-1/2}\,{ {Q}}\,\widehat{\id}^{-1/2}:=\begin{footnotesize}\begin{pmatrix} 
\frac{2\,\mu_{\rm e}+\lambda_{\rm e}}{\varrho_0 }k^2 & 0	&\frac{\beta \,k}{\varrho_0 \sqrt{\varkappa}}
 \\
 0 & \frac{\mu_{\rm e}}{\varrho_0 }k^2 & 0
 \\
 \frac{\beta \,k}{\varrho_0 \sqrt{\varkappa}} & 0 & \,\frac{\alpha\,k^2+\xi }{\varrho_0 \,\varkappa}
  \end{pmatrix}\end{footnotesize}\hspace{3.5cm}\notag
 \end{align}
 the system \eqref{11} turns into
 \begin{align}\label{n11}
 \frac{1}{k^2}  {\mathcal{T}}\,y''(x_2)+ {\rm i}\,\frac{1}{k} (  {\mathcal{R}}+  {\mathcal{R}}^T)\,y'(x_2)-  {\mathcal{Q}}\,y(x_2)+k^2\, v^2 \, \id\,y(x_2)=\,0,\qquad
 \frac{1}{k^2}  {\mathcal{T}}\,y'(0)+ {\rm i}\,\frac{1}{k} \,  {\mathcal{R}}^T\,y(0)=&\,0.
 \end{align}
 
 \begin{lemma}\label{lpd} If  the constitutive coefficients satisfy the conditions 	$\mu_{\rm e}>0,\ \ \lambda_{\rm e}+2\mu_{\rm e}>0,\ \ \alpha>0$ and \linebreak $(\lambda_{\rm e}+2\mu_{\rm e})\,\xi >\beta ^2$
 then the	 matrices $  {\mathcal{Q}}$ and $  {\mathcal{T}}$ are symmetric and positive definite.	
 \end{lemma}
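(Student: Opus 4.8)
The plan is to verify the two claimed properties separately and to exploit the sparse structure of both matrices so that positive definiteness reduces to a handful of scalar inequalities, all of which follow directly from the hypotheses.

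First, for $\mathcal{T}$: reading off \eqref{nTQ}, the matrix is diagonal, hence trivially symmetric, with diagonal entries $\frac{k^2\mu_{\rm e}}{\varrho_0}$, $\frac{k^2(\lambda_{\rm e}+2\mu_{\rm e})}{\varrho_0}$ and $\frac{k^2\alpha}{\varrho_0\,\varkappa}$. Since $k>0$, $\varrho_0>0$ and $\varkappa>0$ (recall we assume $\varrho_0,\varkappa>0$ throughout), these are positive precisely under the hypotheses $\mu_{\rm e}>0$, $\lambda_{\rm e}+2\mu_{\rm e}>0$ and $\alpha>0$, so $\mathcal{T}$ is positive definite.

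For $\mathcal{Q}$: symmetry is immediate from \eqref{nTQ}, the only nonzero off-diagonal entries being the equal $(1,3)$ and $(3,1)$ terms $\frac{\beta k}{\varrho_0\sqrt{\varkappa}}$. The second row and column are decoupled from the rest, so, discarding that coordinate — whose diagonal entry $\frac{k^2\mu_{\rm e}}{\varrho_0}$ is positive by $\mu_{\rm e}>0$ — positive definiteness of $\mathcal{Q}$ is equivalent to that of the $2\times2$ block formed by the first and third coordinates,
\[
\mathcal{Q}_0=\begin{pmatrix} \dfrac{(\lambda_{\rm e}+2\mu_{\rm e})\,k^2}{\varrho_0} & \dfrac{\beta\,k}{\varrho_0\sqrt{\varkappa}} \\ \dfrac{\beta\,k}{\varrho_0\sqrt{\varkappa}} & \dfrac{\alpha\,k^2+\xi}{\varrho_0\,\varkappa} \end{pmatrix}.
\]
I would then apply Sylvester's criterion: the leading entry is positive by $\lambda_{\rm e}+2\mu_{\rm e}>0$, while a short computation gives $\det\mathcal{Q}_0=\frac{k^2}{\varrho_0^2\,\varkappa}\bigl[(\lambda_{\rm e}+2\mu_{\rm e})(\alpha\,k^2+\xi)-\beta^2\bigr]$.

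The one point worth flagging is that the hypothesis supplies only $(\lambda_{\rm e}+2\mu_{\rm e})\,\xi>\beta^2$, whereas the determinant asks for $(\lambda_{\rm e}+2\mu_{\rm e})(\alpha\,k^2+\xi)>\beta^2$. This is not an obstacle but a strengthening: since $\lambda_{\rm e}+2\mu_{\rm e}>0$, $\alpha>0$ and $k^2>0$, the extra summand $(\lambda_{\rm e}+2\mu_{\rm e})\,\alpha\,k^2$ is strictly positive, so $(\lambda_{\rm e}+2\mu_{\rm e})(\alpha\,k^2+\xi)>(\lambda_{\rm e}+2\mu_{\rm e})\,\xi>\beta^2$ and the determinant is positive. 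Hence $\mathcal{Q}_0$, and with it $\mathcal{Q}$, is positive definite. There is no genuine difficulty here; the computation is routine, the only care needed being the correct bookkeeping of the $\varrho_0$ and $\varkappa$ factors when forming $\det\mathcal{Q}_0$.
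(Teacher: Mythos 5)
Your proof is correct and follows essentially the same route as the paper: both verify positive definiteness through Sylvester's criterion, with the identical key point that the required determinant inequality $(\lambda_{\rm e}+2\mu_{\rm e})(\alpha\,k^2+\xi)>\beta^2$ follows from the hypothesis $(\lambda_{\rm e}+2\mu_{\rm e})\,\xi>\beta^2$ together with the strictly positive extra term $(\lambda_{\rm e}+2\mu_{\rm e})\,\alpha\,k^2$. The only cosmetic difference is that the paper checks the leading principal minors of the unscaled matrices ${T}$ and ${Q}$ from \eqref{x47} and then transfers definiteness to $\mathcal{T}$ and $\mathcal{Q}$ via the congruence with $\widehat{\id}^{-1/2}$, whereas you work on $\mathcal{Q}$ directly and exploit its decoupled second row and column to reduce to a $2\times 2$ block.
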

 \begin{proof}
 	The symmetry is clear.  
 	The matrix $ {T}$ is positive definite if and only if $\mu_{\rm e}>0,\,\lambda_{\rm e}+2\mu_{\rm e}>0,\,\alpha>0$ and so the matrix $  {\mathcal{T}}$ is positive definite, being a product of positive definite matrices.
 	
 	In addition,  ${ {Q}}$ is positive-definite if and only if the principal minors are positive, namely
 	\begin{align}
 	{ {Q}}_{11}&=k^2(\lambda_{\rm e}+2\,\mu_{\rm e}),\qquad
 	{ {Q}}_{11}{ {Q}}_{22}-{ {Q}}_{12}{ {Q}}_{21}=k^4\,\mu_{\rm e}(\lambda_{\rm e}+ 2\,\mu_{\rm e})\,,\\
 	\det({ {Q}})&=k^4\,\mu_{\rm e}\left[(\lambda_{\rm e}+ 2\,\mu_{\rm e})\alpha\,k^2+(\lambda_{\rm e}+ 2\,\mu_{\rm e})\xi-\beta ^2\right]\,,\notag
 	\end{align}
 i.e., under the hypothesis of the lemma. 
 Since $ {Q}$ is positive definite, so it is $  {\mathcal{Q}}$ defined by \eqref{nTQ}, and the proof is complete.
 \end{proof} 
 
 \subsection{The existence of the limiting speed in the model}

We now  seek  a solution $y$ of the differential system \eqref{n11} in the form 
 \begin{align}\label{x7}
 y(x_2)=\begin{footnotesize}\begin{pmatrix} 	d_1
 	\\
 	d_2
 	\\d_3
 	\end{pmatrix}\end{footnotesize} \,e^{{\rm i}\,r\,k\,x_2},\qquad \text{Im}\,r>0,
 \end{align}
 where $r \, \in \mathbb{C}$ is a complex parameter, $d=\begin{footnotesize}\begin{pmatrix} 	d_1,
 &
 d_2,
 &d_3
 \end{pmatrix}\end{footnotesize}^T \in \mathbb{C}^3 $, $d
 \neq 0$ is the amplitude and $ \text{Im}\,r$ is the coefficient of the imaginary part of $r$. 
 
 From \eqref{x7} and $\eqref{n11}$ we obtain the  system
 \begin{align}\label{x8}
 [r^2  {\mathcal{T}}+r\,(  {\mathcal{R}}+  {\mathcal{R}}^T)+  {\mathcal{Q}}-\ k^2 v^2 {\id}]\,d=0,\qquad \qquad 
 [r\,  {\mathcal{T}}+  {\mathcal{R}}^T]\,d=0.
 \end{align}
 
Because \eqref{x8}$_1$ is a characteristic equation corresponding to an eigenvalue problem, the eigenvectors $d=\begin{footnotesize}\begin{pmatrix} 	d_1,
&
 d_2,
&d_3
 \end{pmatrix}\end{footnotesize}^T\neq0$  if 
  \begin{align}\label{x9}
 \det\,[r^2  {\mathcal{T}}+r(  {\mathcal{R}}+  {\mathcal{R}}^T)+  {\mathcal{Q}}-k^2 v^2 {\id}]=0,
 \end{align}
 which is an equation of order 6 with the unknown  $r$. We will search the conditions for having solutions for \eqref{x8} with $ \text{Im}\,r>0$, which ensures the asymptotic decay condition \eqref{04}.

The {\bf limiting speed} in the micro-voids model is the limiting speed associated to 	$  {\mathcal{T}},   {\mathcal{Q}}\in \mathbb{R}^{3\times 3}$  and $  {\mathcal{R}}\in \mathbb{R}^{3\times 3}$, see Definition \ref{deflim} and Theorem \ref{fmth}.

 \begin{proposition}\label{lemmaGH} If  the constitutive coefficients satisfy the conditions 	$\mu_{\rm e}>0,\,\lambda_{\rm e}+2\mu_{\rm e}>0,\,\alpha>0$ and $(\lambda_{\rm e}+2\mu_{\rm e})\,\xi >\beta ^2$,
 	then there	exists a limiting speed $\widehat{v}>0$ and moreover if one root   $r_v$ of the characteristic equation \eqref{x9} is real then it corresponds to a speed $v\geq \widehat{v}$ (non-admissible).
 \end{proposition}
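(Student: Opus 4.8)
The plan is to reduce the entire statement to a single coercivity fact: that for every real $r$ the symmetric matrix
\[
A(r):=r^2\,\mathcal{T}+r\,(\mathcal{R}+\mathcal{R}^T)+\mathcal{Q}
\]
is positive definite. First I would observe that for real $r$ this matrix is genuinely symmetric, since $\mathcal{T}$ and $\mathcal{Q}$ are symmetric by Lemma \ref{lpd} and $\mathcal{R}+\mathcal{R}^T$ is symmetric by construction; hence its spectrum is real. By the characteristic equation \eqref{x9}, a real number $r$ is a root at speed $v$ if and only if $k^2v^2$ is an eigenvalue of $A(r)$. Thus a real root exists for a given $v$ precisely when $k^2v^2\in\bigcup_{r\in\mathbb{R}}\mathrm{spec}\,A(r)$, and the whole proposition is controlled by how low this spectral set can reach.

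The core step is the key lemma: under $\mu_{\rm e}>0$, $\lambda_{\rm e}+2\mu_{\rm e}>0$, $\alpha>0$, $(\lambda_{\rm e}+2\mu_{\rm e})\,\xi>\beta^2$ one has $A(r)\succ 0$ for all $r\in\mathbb{R}$. I would prove this by Sylvester's criterion applied to $A(r)$ as an $r$-dependent family. The first leading minor is $\tfrac{k^2}{\varrho_0}\bigl(\mu_{\rm e}\,r^2+(\lambda_{\rm e}+2\mu_{\rm e})\bigr)$, positive for all $r$ thanks to $\mu_{\rm e}>0$ and $\lambda_{\rm e}+2\mu_{\rm e}>0$. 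The second leading minor reduces to the positive definiteness of the classical isotropic plane-strain acoustic determinant, which again holds under $\mu_{\rm e}>0$ and $\lambda_{\rm e}+2\mu_{\rm e}>0$. The third minor $\det A(r)$ is the delicate one: it is a degree-six polynomial in $r$ whose value at $r=0$ is $\det\mathcal{Q}$ (positive exactly by $(\lambda_{\rm e}+2\mu_{\rm e})(\alpha k^2+\xi)>\beta^2$, cf.\ Lemma \ref{lpd}), and whose global positivity couples the volumetric displacement to the void field $\zeta$ through the coefficient $\beta$. I would show, by completing squares in the $\beta$-block and using the discriminant conditions, that $\det A(r)>0$ for all real $r$ collapses exactly onto $(\lambda_{\rm e}+2\mu_{\rm e})\,\xi>\beta^2$. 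This is equivalent to the strong ellipticity condition \eqref{3.1}, consistent with the parameter range \eqref{rwc1} for which real progressive plane waves exist.

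With the key lemma in hand, I would set $\mu_0:=\min_{r\in\mathbb{R}}\lambda_{\min}\!\bigl(A(r)\bigr)$ and define $\widehat{v}:=\tfrac{1}{k}\sqrt{\mu_0}$. The minimum is attained and strictly positive: $r\mapsto\lambda_{\min}(A(r))$ is continuous, and since $A(r)/r^2\to\mathcal{T}\succ 0$ we get $\lambda_{\min}(A(r))\to+\infty$ as $|r|\to\infty$, so the infimum is reached at some finite $r_\ast$ and equals $\lambda_{\min}(A(r_\ast))>0$ by the lemma. Hence $\widehat{v}>0$. For $0\le v<\widehat{v}$ we then have $k^2v^2<\mu_0\le\lambda_{\min}(A(r))$ for every real $r$, so $k^2v^2$ is never an eigenvalue of any $A(r)$ and \eqref{x9} admits no real root; this is exactly the limiting-speed property of Definition \ref{deflim}.

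Finally, for the ``moreover'' clause, suppose $r_v$ is a real root at speed $v$. Then $k^2v^2\in\mathrm{spec}\,A(r_v)$, whence $k^2v^2\ge\lambda_{\min}(A(r_v))\ge\mu_0=k^2\widehat{v}^2$, i.e.\ $v\ge\widehat{v}$, which is the asserted non-admissibility. I expect the main obstacle to be the key lemma of the second paragraph: establishing positive definiteness of the $r$-dependent $3\times3$ matrix $A(r)$ \emph{uniformly} in $r$ forces one to handle the simultaneous coupling of the two displacement components and the void field, and the delicate point is verifying that the discriminant/minor conditions reduce precisely to \eqref{rwc1} rather than to some strictly stronger inequality; the reductions in paragraphs one, three and four are then routine.
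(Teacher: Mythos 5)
Your proposal is correct and establishes everything the proposition claims, but it takes a more purely linear-algebraic route than the paper. The paper's own proof substitutes $r_v=\tan\theta$ and reinterprets a real root as a genuine bulk plane wave travelling in the rotated direction $n_\theta=(\cos\theta,\sin\theta,0)$ with wave number $k/\cos\theta$ and speed $v_\theta=v\cos\theta$; it then shows that the acoustic tensor \eqref{xx1} is positive definite for every in-plane direction exactly under \eqref{rwc1} (so all such bulk speeds are real), defines $\widehat v=\inf_\theta v_\theta$ as in \eqref{lims}, and concludes via $v=v_\theta/\cos\theta\ge v_\theta\ge\widehat v$. Your pencil $A(r)$ is the same object in disguise, since $\mathcal{Q}_\theta=\cos^2\theta\,A(\tan\theta)$; hence your key lemma ($A(r)\succ0$ for all real $r$) is equivalent to the paper's positive definiteness of $\mathcal{Q}_\theta$, only proved by Sylvester minors instead of by the bulk-wave/real-frequency argument. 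Your minor computation does close: the second leading minor equals $\tfrac{k^4}{\varrho_0^2}\,\mu_{\rm e}(\lambda_{\rm e}+2\mu_{\rm e})(1+r^2)^2$, and the determinant factors as
\[
\det A(r)=\frac{k^4\,\mu_{\rm e}\,(1+r^2)^2}{\varrho_0^3\,\varkappa}\Big[(\lambda_{\rm e}+2\mu_{\rm e})\big(\alpha\,k^2(1+r^2)+\xi\big)-\beta^2\Big],
\]
which is positive for every real $r$ under \eqref{rwc1}. One quibble: for fixed $k>0$, positivity of $\det A(r)$ for all $r$ is equivalent to the slightly weaker inequality $(\lambda_{\rm e}+2\mu_{\rm e})(\alpha k^2+\xi)>\beta^2$; it ``collapses exactly onto'' \eqref{rwc1}$_4$ only if one demands it for all $k>0$. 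This does not affect your argument, since the implication you actually need holds. As to what each approach buys: your variational definition $k^2\widehat v^{\,2}=\min_{r\in\mathbb{R}}\lambda_{\min}(A(r))$ yields the maximal (sharp) limiting speed --- it is precisely the smallest $v$ at which some $\widetilde{\mathcal{Y}}_\theta$ of Theorem \ref{fmth} becomes singular, matching the characterization given in that theorem's footnote --- and attainment plus strict positivity follow cleanly from continuity and coercivity of $r\mapsto\lambda_{\min}(A(r))$. The paper's angle parametrization instead produces the a priori smaller quantity $\inf_r\lambda_{\min}(A(r))/(1+r^2)$ (because of the $\cos\theta$ weighting in $v_\theta=v\cos\theta$), which still satisfies Definition \ref{deflim}; its side benefits are the physical reading ``subsonic means slower than every bulk wave in every in-plane direction'' and the rotated positive-definite matrices $\mathcal{Q}_\theta$, $\widetilde{Q}_\theta$ (Proposition \ref{propQp}) that are reused later in the Fu--Mielke representation \eqref{explMielke}.
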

 \begin{proof}
Assume that there exists a real $r_v$ as solution  of the characteristic equation \eqref{x9}, then $ \exists \,\theta \in (-\frac{\pi}{2},\frac{\pi}{2})$ such that $r_v=\tan \theta$. 
Since
\begin{align}
e^{ik\, (  x_1+r x_2-vt)}=e^{ik\, (  x_1+\tan\theta x_2-vt)}=e^{\frac{ik}{\cos\theta}(\cos\theta x_1+\sin\theta x_2-\cos\theta vt)},
\end{align}
and since the solution $\mathcal{U}$  given by \eqref{x5} can be written using \eqref{x7} in the form
 \begin{align}\label{46}
 \mathcal{U}(x_1,x_2,t)&=\begin{footnotesize}
\begin{pmatrix} u_1(x_1,x_2,t)
 	\\
 	u_2(x_1,x_2,t)
 	\\\zeta(x_1,x_2,t)
 	\end{pmatrix}\end{footnotesize}
 	={\rm Re}\left[\begin{footnotesize}
 		\begin{pmatrix} a_1
 	\\
 	a_2
 	\\ {\rm i}\, \,a_3
 	\end{pmatrix}\end{footnotesize}e^{\frac{ik}{\cos\theta}(\cos\theta x_1+\sin\theta x_2-\cos\theta vt)}\right],
 \end{align}
where $(a_1,a_2,a_3)^T=\widehat{\id}^{-1/2}(d_1,d_2,d_3)^T$,  we are looking to $\mathcal{U}(x_1,x_2,t)$ as to    a non-trivial  plane body wave solution with  wave number $k_{\theta}=\frac{k}{\cos\theta}$, the speed $v_{\theta}=v\cos\theta$, the same frequencies $\omega_\theta=k_{\theta}v_{\theta}=kv=\omega$  and propagation in the direction ${n}_\theta$ where 
 $	{n}_\theta=(\cos\theta,\sin\theta,0)$. 
 
 This is the reason why, in the following, we examine the conditions under which we observe "real" bulk waves only for positive real values of $\omega^2$ and the presence of a non-trivial solution of \eqref{PDE}  having  the form
 \begin{align}\label{solutionRay}
 u(x_1,x_2,t)&=\begin{footnotesize}\begin{pmatrix} u_1(x_1,x_2,t)
 \\
 u_2(x_1,x_2,t)
 \\0
 \end{pmatrix}\end{footnotesize}
 =\begin{footnotesize}\begin{pmatrix}\widehat{u}_1
 \\
 \widehat{u}_2
 \\
 0
 \end{pmatrix}\end{footnotesize}\, e^{{\rm i}\, \left(k\langle m,\, x\rangle_{\mathbb{R}^3}-\,\omega \,t\right)},\qquad
 \zeta(x_1,x_2,t)
 =i\,\widehat{\zeta}\, e^{{\rm i}\, \left(k\langle m,\, x\rangle_{\mathbb{R}^3}-\,\omega \,t\right)},
 \\&\qquad \qquad 
 \qquad \qquad 
 (\widehat{u}_1,\widehat{u}_2, \widehat{ \zeta   })^T\in\mathbb{C}^{3}, \quad (\widehat{u}_1,\widehat{u}_2, \widehat{ \zeta   })^T\neq 0\,\notag
 \end{align}
 for a wave propagation direction  $m = (m_1,m_2,0)^T$ with $\lVert{m}\rVert^2=1$ and for every  wave number $k>0$.  With this conditions, system \eqref{PDE} becomes
 \begin{align}\label{PDE_Ray}
 \left[ {Q}_{2}(k)-\omega^2\widehat{\id}\,\right] 
 \begin{footnotesize}
 \begin{pmatrix} 
 \widehat{u}_1 \\
 \widehat{u}_2 \\
 \widehat{\zeta} 
 \end{pmatrix}\end{footnotesize}
 =\begin{footnotesize}
 \begin{pmatrix} 
 0 \\
 0 \\
 0 
 \end{pmatrix}\end{footnotesize} ,
 \end{align} 
 where
 \begin{align}\label{xx1}
 { {Q}_2}(k)&=
 \begin{footnotesize}\begin{pmatrix}
 (\lambda_{\rm e}+2\mu_{\rm e} )k^2m_1^2+\mu_{\rm e}\,k^2m_2^2 & (\lambda_{\rm e}+\mu_{\rm e} )k^2m_1m_2 & \beta \,k\,m a_1 \vspace{2mm}\\
 (\lambda_{\rm e}+\mu_{\rm e} )k^2m_1m_2 & (\lambda_{\rm e}+2\mu_{\rm e} )k^2m_2^2+\mu_{\rm e}\,k^2m_1^2 & \beta \,k\,m_2 \vspace{2mm}\\
 \beta \,k\,m_1 & \beta \,k\,m_2 & \alpha\,k^2+\xi 
 \end{pmatrix}\end{footnotesize},\qquad
 \widehat{\id}=\begin{footnotesize}\begin{pmatrix} 
 \varrho_0  & 0 & 0 \\
 0 & \varrho_0  & 0 \\
 0 & 0 & \varrho_0 \,\varkappa\, \end{pmatrix}\end{footnotesize}.
 \end{align}
 An equivalent form of \eqref{xx1} is
 \begin{align}\label{PDE_Ray_tilda}
 \left[ {\widetilde{Q}_{2}}(k)-\omega^2\,\id\,\right] 
 \begin{footnotesize}
 \begin{pmatrix} 
 \widetilde{u}_1 \\
 \widetilde{u}_2 \\
 \widetilde{\zeta} 
 \end{pmatrix}\end{footnotesize}
 =\begin{footnotesize}
 \begin{pmatrix} 
 0 \\
 0 \\
 0 
 \end{pmatrix}\end{footnotesize} , \qquad 
 (\widetilde{u}_1, \widetilde{u}_2, \widetilde{\zeta})^T=\widehat{\id}^{1/2}(\widehat{u}_1, \widehat{u}_2, \widehat{\zeta} )^T.
 \end{align} 
 
 The matrix $ { {Q}_2}(k)$  is positive definite for all $k>0$ if and only if
 \begin{align}
m_1^2 (\lambda_{\rm e} +2 \mu_{\rm e} )+m_2^2 \mu_{\rm e}>0, \quad  \mu_{\rm e}  (\lambda_{\rm e} +2 \mu_{\rm e} )>0, 
\qquad k^2\left[\alpha   \mu_{\rm e}  (\lambda_{\rm e} +2 \mu_{\rm e} )\right]+\left[ \mu_{\rm e}  \left(\xi   (\lambda_{\rm e} +2 \mu_{\rm e} )-\beta  ^2\right)\right]>0,
 \end{align}
 for all $k>0$. It is easy to remark that 
 the above inequalities are satisfied if and only if $\mu_{\rm e}>0,\,\lambda_{\rm e}+2\mu_{\rm e}>0,\,\alpha>0$ and $(\lambda_{\rm e}+2\mu_{\rm e})\,\xi >\beta ^2$.
 But $ { {Q}_2}(k)$ is positive definite if and only if $ {\widetilde{ {Q}}_2}(k)=\widehat{\id}^{-1/2} { {Q}_2}(k)\,\widehat{\id}^{-1/2}$ is positive definite, which  further is equivalent to the fact that  the equation
$
 \det\left[ {\widetilde{Q}_{2}}(k)-\omega^2\,\id\,\right] =0
 $ 
 has only real positive solutions for $\omega^2$.
 Hence, we have proven that for every     direction of propagation of the form $m=(m_1,m_2, 0)^T$, $\lVert m\rVert^2=1$, the necessary and sufficient conditions for existence of  a non trivial solution of the form  \eqref{solutionRay} for the system of partial differential equations \eqref{PDE} are  $\mu_{\rm e}>0,\,\lambda_{\rm e}+2\mu_{\rm e}>0,\,\alpha>0$ and $(\lambda_{\rm e}+2\mu_{\rm e})\,\xi >\beta ^2$

By knowing this, we may proceed further to our analysis.  A direct substitution of \eqref{46} into \eqref{algPDE} and
 using the same idea as in the case of obtaining the formulas \eqref{11t}-\eqref{n11} we have that there  exists   a non-trivial solution  $\begin{footnotesize}\begin{pmatrix}d_1,&d_2,&d_3\end{pmatrix}\end{footnotesize}\neq 0$ of the algebraic system written  in  matrix form 
\begin{align}\label{x093}
\left[	\sin^2\theta\,  {\mathcal{T}}+\sin\theta\cos\theta (  {\mathcal{R}}+  {\mathcal{R}}^T)+\cos^2\theta\,  {\mathcal{Q}}-k^2 v^2\cos^2\theta \,{\id}\right]\begin{footnotesize}\begin{pmatrix} 	d_1
\\
d_2
\\	d_3
\end{pmatrix}\end{footnotesize}=0,
\end{align}
with notations from \eqref{nTQ}. Let us remark that the equation \eqref{x093} is the propagation condition for plane waves in isotropic materials with micro-voids, in the direction $	{n}_\theta=(\cos\theta,\sin\theta,0)$. 
According to the information of the previous paragraph, for the direction $	{n}_\theta=(\cos\theta,\sin\theta,0)$ in particular, the system of partial differential equations \eqref{PDE} admits a non trivial solution in the form given by \eqref{solutionRay} only for real positive values $\omega^2$.
Therefore, 
to each  $\theta \in(-\frac{\pi}{2},\frac{\pi}{2})$  we can associate the  real frequencies $\omega_\theta$ satisfying 
\begin{align}\label{omega_theta}
\det\,\{	\sin^2\theta  {\mathcal{T}}+\sin\theta\cos\theta (  {\mathcal{R}}+  {\mathcal{R}}^T)+\cos^2\theta  {\mathcal{Q}}-\omega^2_\theta \,{\id}\}=0,
\end{align}
and further  a propagation speed $v_\theta=v\cos\theta$ such that $ \omega_\theta=k\,v_\theta$ that verify the equation
\begin{align}\label{x11}
 \det\,\{	\sin^2\theta  {\mathcal{T}}+\sin\theta\cos\theta (  {\mathcal{R}}+  {\mathcal{R}}^T)+\cos^2\theta  {\mathcal{Q}}-k^2 v^2_\theta  \,{\id}\}=0.
 \end{align}
 
 We define the limiting speed $\widehat{v}$ as the minimum of the values of $v_\theta=v\,\cos\theta$ for all $\theta\in(-\frac{\pi}{2},\frac{\pi}{2})$
 \begin{align}
 \widehat{v}=\inf_{\theta\in(-\frac{\pi}{2},\frac{\pi}{2})} v_\theta.\label{lims}
 \end{align}
 
 In conclusion, for a real $r_v$ there exists a $ \theta \in (-\frac{\pi}{2},\frac{\pi}{2})$ such that $r_v=\tan \theta$. For this $\theta$ we have the  real frequencies $\omega_\theta$ satisfying \eqref{omega_theta} and the propagation speed $v_\theta$ corresponding to $r_v$ is  satisfying \eqref{x11} with $v_\theta\geq\widehat{v}$.
Thus,  if $v$ is such that
 $
 0\le v<\widehat{v},
 $
 then $r_v$ can not be real and if $r_v$ is real, then $v\geq\widehat{v}$.
\end{proof}

\begin{proposition} If  the constitutive coefficients satisfy the conditions 	$\mu_{\rm e}>0,\,\lambda_{\rm e}+2\mu_{\rm e}>0,\,\alpha>0$ and $(\lambda_{\rm e}+2\mu_{\rm e})\,\xi >\beta ^2$, then
	for all $\theta\in \left(-\frac{\pi}{2},\frac{\pi}{2}\right)$ and $k>0$, the tensor $  {\mathcal{Q}}_\theta:=\sin^2\theta  {\mathcal{T}}+\sin\theta\cos\theta (  {\mathcal{R}}+  {\mathcal{R}}^T)+\cos^2\theta  {\mathcal{Q}}$ is positive definite.
\end{proposition}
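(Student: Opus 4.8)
Since $\mathcal{T}$, $\mathcal{Q}$ and $\mathcal{R}+\mathcal{R}^T$ are symmetric, $\mathcal{Q}_\theta$ is symmetric, and I would establish positive definiteness by Sylvester's criterion, writing $s=\sin\theta$, $c=\cos\theta$. Inserting the explicit entries from \eqref{nTQ}, the first leading principal minor is $(\mathcal{Q}_\theta)_{11}=\frac{k^2}{\varrho_0}[(\lambda_{\rm e}+2\mu_{\rm e})c^2+\mu_{\rm e}s^2]$, which is strictly positive under $\mu_{\rm e}>0$ and $\lambda_{\rm e}+2\mu_{\rm e}>0$ because $c^2$ and $s^2$ never vanish together. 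For the $2\times 2$ minor I would expand $(\mathcal{Q}_\theta)_{11}(\mathcal{Q}_\theta)_{22}-(\mathcal{Q}_\theta)_{12}^2$ and use the identity $(\lambda_{\rm e}+2\mu_{\rm e})^2+\mu_{\rm e}^2-(\lambda_{\rm e}+\mu_{\rm e})^2=2\mu_{\rm e}(\lambda_{\rm e}+2\mu_{\rm e})$ together with $(c^2+s^2)^2=1$; all $\theta$-dependence then cancels and the minor collapses to the $\theta$-independent quantity $\frac{k^4}{\varrho_0^2}\mu_{\rm e}(\lambda_{\rm e}+2\mu_{\rm e})>0$.

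The main obstacle is the third minor $\det\mathcal{Q}_\theta$, whose brute-force expansion is unpleasant. The efficient route is to exploit the structural identity
\begin{align*}
\mathcal{Q}_\theta=\cos^2\theta\;\widehat{\id}^{-1/2}\,Q_2\!\Big(\frac{k}{\cos\theta}\Big)\,\widehat{\id}^{-1/2}=\cos^2\theta\;\widetilde{Q}_2\!\Big(\frac{k}{\cos\theta}\Big),
\end{align*}
where $Q_2$ is the acoustic tensor from \eqref{xx1} evaluated in the direction $m=n_\theta=(\cos\theta,\sin\theta,0)$. This is precisely the reinterpretation of the propagation condition \eqref{x093} as an $n_\theta$-plane wave with wave number $k/\cos\theta$ already exploited in the proof of Proposition \ref{lemmaGH}. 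I would verify the identity entrywise; the only nontrivial checks are in the third row and column, where the factor $k/\cos\theta$ in $Q_2$ and the prefactor $\cos^2\theta$ combine to reproduce exactly the off-diagonal terms $\beta k c^2$, $\beta k sc$ and the diagonal term $\alpha k^2+\xi c^2$.

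Granting the identity, the conclusion is immediate: for $\theta\in(-\frac{\pi}{2},\frac{\pi}{2})$ we have $\cos^2\theta>0$ and $k/\cos\theta>0$, and the proof of Proposition \ref{lemmaGH} already shows that under $\mu_{\rm e}>0$, $\lambda_{\rm e}+2\mu_{\rm e}>0$, $\alpha>0$ and $(\lambda_{\rm e}+2\mu_{\rm e})\xi>\beta^2$ the matrix $\widetilde{Q}_2(k')$ is positive definite for every wave number $k'>0$. Hence $\mathcal{Q}_\theta$ is a strictly positive scalar multiple of a positive definite matrix and is therefore positive definite; in particular the missing third minor is $\det\mathcal{Q}_\theta=\cos^6\theta\,\det\widetilde{Q}_2(k/\cos\theta)>0$.

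As a fully self-contained alternative that avoids quoting Proposition \ref{lemmaGH}, I could instead finish Sylvester's criterion directly: a cofactor expansion of $\det\mathcal{Q}_\theta$ along the third row, factoring out $\mu_{\rm e}$ as in Lemma \ref{lpd} and using $\cos^2\theta+\sin^2\theta=1$, reduces the determinant to
\begin{align*}
\det\mathcal{Q}_\theta=\frac{k^4\,\mu_{\rm e}}{\varrho_0^3\,\varkappa}\Big[(\lambda_{\rm e}+2\mu_{\rm e})\,\alpha\,k^2+\big((\lambda_{\rm e}+2\mu_{\rm e})\xi-\beta^2\big)\cos^2\theta\Big],
\end{align*}
which is strictly positive under the hypotheses since $(\lambda_{\rm e}+2\mu_{\rm e})\alpha k^2>0$ and $(\lambda_{\rm e}+2\mu_{\rm e})\xi-\beta^2>0$. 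Either way all three leading principal minors are positive, proving the claim.
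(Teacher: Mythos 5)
Your proposal is correct, and its main route coincides with the paper's own proof: the paper likewise establishes positive definiteness of $\mathcal{Q}_\theta$ by recognizing it, via the propagation condition \eqref{x093}, as the acoustic tensor of a plane wave travelling in the direction $n_\theta=(\cos\theta,\sin\theta,0)$, whose positive definiteness under the stated hypotheses was already obtained in the proof of Proposition \ref{lemmaGH}. If anything, your formulation is sharper than the paper's: the paper writes the identification loosely as ``$\widetilde{Q}_2(k)=\mathcal{Q}_\theta$'', whereas your identity $\mathcal{Q}_\theta=\cos^2\theta\,\widetilde{Q}_2(k/\cos\theta)$ keeps track of the wave-number rescaling $k\mapsto k/\cos\theta$ and the prefactor $\cos^2\theta$ that are actually needed for the entries to match. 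The self-contained Sylvester alternative you add is also correct --- both the $\theta$-independent second minor $\frac{k^4}{\varrho_0^2}\,\mu_{\rm e}(\lambda_{\rm e}+2\mu_{\rm e})$ and the determinant formula $\det\mathcal{Q}_\theta=\frac{k^4\mu_{\rm e}}{\varrho_0^3\varkappa}\bigl[(\lambda_{\rm e}+2\mu_{\rm e})\alpha k^2+\bigl((\lambda_{\rm e}+2\mu_{\rm e})\xi-\beta^2\bigr)\cos^2\theta\bigr]$ check out --- and it buys something the paper's one-line proof does not: an argument independent of Proposition \ref{lemmaGH}, together with an explicit display of how the hypothesis $(\lambda_{\rm e}+2\mu_{\rm e})\xi>\beta^2$ enters and why positivity degenerates only as $\theta\to\pm\frac{\pi}{2}$ in the limit $\alpha\to 0$.
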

\begin{proof} The proof is straightforward since 
from the equation \eqref{x093} we have that the tensor $  {\mathcal{Q}}_\theta$ satisfies
\begin{align}\label{x093_1}
\left[	  {\mathcal{Q}}_\theta-k^2 v^2\cos^2\theta \,{\id}\right]\begin{footnotesize}\begin{pmatrix} 	d_1
\\
d_2
\\	d_3
\end{pmatrix}\end{footnotesize}=0,
\end{align}
which means that if we take the particular propagation direction $m=(\cos\theta,\sin\theta,0)$ we obtain that $ {\widetilde{Q}_{2}}(k)=  {\mathcal{Q}}_\theta$ is positive definite. 
\end{proof}

\begin{proposition}\label{propQp}
If  the constitutive coefficients satisfy the conditions $\mu_{\rm e}>0,\,\lambda_{\rm e}+2\mu_{\rm e}>0,\,\alpha>0$ and $(\lambda_{\rm e}+2\mu_{\rm e})\,\xi >\beta ^2$, then
for all $\theta\in \left(-\frac{\pi}{2},\frac{\pi}{2}\right)$, $k>0$ and $0\le v<\widehat{v}$, the tensor $\widetilde{ {Q}}_\theta:=  {\mathcal{Q}}_\theta-k^2 v^2 \cos^2\theta \,{\id}$ is positive definite.
\end{proposition}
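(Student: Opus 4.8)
The plan is to reduce the positive definiteness of $\widetilde{Q}_\theta=\mathcal{Q}_\theta-k^2v^2\cos^2\theta\,\id$ to a single scalar inequality between $v^2\cos^2\theta$ and the smallest eigenvalue of $\mathcal{Q}_\theta$, and then to read off that inequality directly from the definition \eqref{lims} of the limiting speed. First I would record that $\mathcal{Q}_\theta=\sin^2\theta\,\mathcal{T}+\sin\theta\cos\theta(\mathcal{R}+\mathcal{R}^T)+\cos^2\theta\,\mathcal{Q}$ is a real symmetric matrix, since $\mathcal{T}$ and $\mathcal{Q}$ are symmetric and $\mathcal{R}+\mathcal{R}^T$ is symmetric by construction. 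Hence it has three real eigenvalues $\lambda_1(\theta)\le\lambda_2(\theta)\le\lambda_3(\theta)$, which by the previous proposition are all strictly positive for every $\theta\in(-\frac{\pi}{2},\frac{\pi}{2})$ and $k>0$. Because subtracting $k^2v^2\cos^2\theta\,\id$ merely shifts every eigenvalue by the same amount, the eigenvalues of $\widetilde{Q}_\theta$ are $\lambda_i(\theta)-k^2v^2\cos^2\theta$, and therefore $\widetilde{Q}_\theta$ is positive definite if and only if $k^2v^2\cos^2\theta<\lambda_{\min}(\mathcal{Q}_\theta):=\lambda_1(\theta)$.

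Next I would translate the definition of the limiting speed into a uniform lower bound for $\lambda_{\min}(\mathcal{Q}_\theta)$. By \eqref{x11} the bulk speed $v_\theta$ associated with the direction $n_\theta=(\cos\theta,\sin\theta,0)$ is the smallest positive root of $\det[\mathcal{Q}_\theta-k^2v_\theta^2\,\id]=0$, i.e.\ $k^2v_\theta^2=\lambda_{\min}(\mathcal{Q}_\theta)$, while by \eqref{lims} the limiting speed is $\widehat{v}=\inf_{\theta}v_\theta$, whose strict positivity is guaranteed by Proposition \ref{lemmaGH}. Consequently $\widehat{v}\le v_\theta$ for every admissible $\theta$, and squaring gives $k^2\widehat{v}^2\le k^2v_\theta^2=\lambda_{\min}(\mathcal{Q}_\theta)$. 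Thus the smallest eigenvalue of $\mathcal{Q}_\theta$ is bounded below by $k^2\widehat{v}^2$ uniformly in $\theta$.

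Finally I would combine the two steps using only $0\le\cos^2\theta\le 1$ together with the strict subsonic hypothesis $0\le v<\widehat{v}$: for every $\theta\in(-\frac{\pi}{2},\frac{\pi}{2})$,
\begin{align}
k^2 v^2\cos^2\theta \;\le\; k^2 v^2 \;<\; k^2\widehat{v}^2 \;\le\; \lambda_{\min}(\mathcal{Q}_\theta),
\end{align}
so the scalar criterion of the first step is satisfied and $\widetilde{Q}_\theta$ is positive definite. The only genuinely delicate point is the bookkeeping in the second step: one must ensure that $\widehat{v}$ is the infimum of the \emph{smallest}-branch bulk speeds $v_\theta$, so that $k^2\widehat{v}^2$ bounds the minimal eigenvalue of $\mathcal{Q}_\theta$ rather than merely some eigenvalue, and one must note that it is precisely the factor $\cos^2\theta\le 1$ (the obliquity rescaling $v\mapsto v\cos\theta$ built into $\widetilde{Q}_\theta$) that allows the strict inequality $v<\widehat{v}$ to survive; the remainder is the elementary eigenvalue-shift argument applied to the symmetric matrix $\mathcal{Q}_\theta$.
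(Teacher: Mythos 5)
Your proof is correct and rests on essentially the same mechanism as the paper's: both reduce positive definiteness of $\widetilde{Q}_\theta$ to the assertion that no eigenvalue of the symmetric matrix $\mathcal{Q}_\theta$ can fall to the shift level $k^2v^2\cos^2\theta$ when $v<\widehat{v}$, which is precisely the minimality of the limiting speed over the family of propagation conditions \eqref{x11}. The only difference is presentational: the paper argues by contradiction (an offending eigenvalue $\Lambda_{\theta_0}$ would produce a speed $\sqrt{\Lambda_{\theta_0}/(k^2\cos^2\theta_0)}<\widehat{v}$ satisfying \eqref{x11n}), whereas you run the same comparison directly through the uniform bound $\lambda_{\min}(\mathcal{Q}_\theta)\ge k^2\widehat{v}^2$ together with $\cos^2\theta\le 1$, which if anything treats the borderline equality case (eigenvalue exactly equal to the shift, where the paper's stated contradiction hypothesis $\Lambda_{\theta_0}<k^2v_0^2\cos^2\theta_0$ is silent) more cleanly.
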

\begin{proof}
We have to prove that    all eigenvalues $  {\mathcal{Q}}_\theta$ are larger than those of the matrix $k^2 v^2 \cos^2\theta \,{\id}$, i.e., than $k^2 v^2 \cos^2\theta $.   Note that since $  {\mathcal{Q}}_\theta:=\sin^2\theta  {\mathcal{T}}+\sin\theta\cos\theta (  {\mathcal{R}}+  {\mathcal{R}}^T)+\cos^2\theta  {\mathcal{Q}}$ is positive definite, it admits only positive eigenvalues. Assuming that there exist  $\theta_0\in \left(-\frac{\pi}{2},\frac{\pi}{2}\right)$ and  $v_0\in[0,\widehat{v})$ for which there is an eigenvalue $\Lambda_{\theta_0}$   of $  {\mathcal{Q}}_\theta$
such that $\Lambda_{\theta_0}<k^2 v^2_0 \cos^2\theta_0 $, then 
\begin{align}
v_{\theta_0}:=\sqrt{\frac{\Lambda_{\theta_0}}{k^2\,\cos^2\theta_0}}<v_0<\widehat{v}
\end{align}
is solution of \eqref{x11}, i.e., for fixed $\theta_0$ we have that $v_{\theta_0}<\widehat{v}$  verifies
 \begin{align}\label{x11n}
\det\,\{	\sin^2{\theta_0}\,  {\mathcal{T}}+\sin{\theta_0}\,\cos{\theta_0} \,(  {\mathcal{R}}+  {\mathcal{R}}^T)+\cos^2{\theta_0}\,  {\mathcal{Q}}-k^2 v^2_{{\theta_0}}\, \cos^2{\theta_0} \,{\id}\}=0.
\end{align} 
This  contradicts  the definition of the limiting speed and Proposition \ref{lims}, since $\widehat{v}$ is the smallest speed having this property. Therefore, it remains that for all  $\theta\in \left(-\frac{\pi}{2},\frac{\pi}{2}\right)$, $k>0$ and for all $0\le v<\widehat{v}$, all the eigenvalues of $  {\mathcal{Q}}_\theta$ are larger than $k^2 v^2 \cos^2\theta$ and the proof is complete.
\end{proof}

Gathering together the calculations done for the construction of the Rayleigh wave solution given by Chirit\u a and Ghiba \cite{ChiritaGhiba2}, we have the explicit form of the solutions of \eqref{x9}  given as
\begin{align}
r_1^{2}=k^{2}\left(  1-\frac{c^{2}}{v_{t}^{2}}\right),\quad \qquad
r_{2,3}^{2}  =  k^{2}\left(  1-\displaystyle\frac{T_{\mp}}{k^{2}%
}\right)\qquad \text{where}\quad  v_{t}=\sqrt{\frac{\mu_{\rm e}}{{ \varrho_0}}}
\end{align}
 and
\begin{align}
T_{\mp}(\omega^{2})  &
=\displaystyle\frac{1}{2\alpha(\lambda_{\rm e}+2\mu_{\rm e}
	)}\Bigg\{{\beta }^{2}-(\xi -{ \varrho_0}\varkappa\omega^{2})(\lambda_{\rm e}+2\mu_{\rm e})+\alpha
\omega^{2}{ \varrho_0}\nonumber\\
&
\qquad \qquad \qquad \quad \mp\sqrt{4\,\alpha\,\omega^{2}{\beta }^{2}{ \varrho_0}+[-{\beta }^{2}+\alpha\omega
	^{2}{ \varrho_0}+(\xi -{ \varrho_0}\varkappa\omega^{2})(\lambda_{\rm e}+2\mu_{\rm e})]^{2}}\Bigg\}.
\end{align}
In view of the definition of the limiting speed, from Theorem \ref{fmth} we conclude:
\begin{proposition}\label{lemmaCG1} If  the constitutive coefficients satisfy the conditions $\mu_{\rm e}>0,\,\lambda_{\rm e}+2\mu_{\rm e}>0,\,\alpha>0$ and $(\lambda_{\rm e}+2\mu_{\rm e})\,\xi >\beta ^2$
	then 	 the roots  $r_v$  of the characteristic equation \eqref{x9} are not real if and only if  
	\begin{equation}
	v^{2}<\min\{v_{s}^{2},{v}_{m}^{2}\}. \label{condspeed3}%
	\end{equation}
	where
	\begin{align}\label{v36} {v}_{s}^2&:={\displaystyle\frac{2\,\mu_{\rm e}  +\lambda_{\rm e}}{ \varrho_0}}, \notag\\v_{m}^2&: = \displaystyle\frac{1}{2k^{2}{ \varrho_0}\varkappa
	}\Big\{\xi +k^{2}(\lambda_{\rm e}+2\mu_{\rm e})\varkappa+k^{2}\alpha\vspace{1mm}\\
	& \qquad \qquad\ \  -\sqrt{(\xi +k^{2}(\lambda_{\rm e}+2\mu_{\rm e})\varkappa+k^{2}\alpha)^{2}-4k^{2}%
		\varkappa\lbrack\xi (\lambda_{\rm e}+2\mu_{\rm e})-{\beta }^{2}+k^{2}\alpha(\lambda_{\rm e}+2\mu_{\rm e}
		)]}\Big\}.
	\notag 
	\end{align}
	This defines the limiting speed in isotropic elastic materials   with micro-voids    to  be
	\begin{align}\label{limCh}
	\widehat{v}:=\inf_{\theta\in(-\frac{\pi}{2},\frac{\pi}{2})} v_\theta\equiv \min\{v_{s}^{2},{v}_{m}^{2}\}.
	\end{align}
\end{proposition}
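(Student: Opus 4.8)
My plan is to combine the abstract limiting-speed result of Proposition~\ref{lemmaGH} with an explicit evaluation of the closed-form roots $r_1^2,\,r_{2,3}^2$ recorded just above. Proposition~\ref{lemmaGH} already supplies the logical equivalence I need: under the stated hypotheses the limiting speed $\widehat v=\inf_\theta v_\theta$ exists, a root $r_v$ of \eqref{x9} can be real only for $v\geq\widehat v$, and for $0\leq v<\widehat v$ every root is non-real. Thus ``the roots of \eqref{x9} are not real $\iff 0\leq v<\widehat v$'' is in hand, and the remaining task is to compute $\widehat v$ from $r_1^2$ and $r_{2,3}^2$ and to verify $\widehat v^{\,2}=\min\{v_s^2,v_m^2\}$, which together with Proposition~\ref{lemmaGH} yields \eqref{condspeed3}--\eqref{limCh}.

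First I would turn non-realness into a sign condition. Conjugating the matrix pencil in \eqref{x9} by $\mathrm{diag}(1,-1,1)$ sends $\mathcal R+\mathcal R^T\mapsto-(\mathcal R+\mathcal R^T)$ while fixing the symmetric matrices $\mathcal T$ and $\mathcal Q$, so the determinant in \eqref{x9} is even in $r$; its six roots are $\pm r_1,\pm r_2,\pm r_3$, and ``all roots non-real'' means $r_j^2\notin[0,\infty)$ for each $j$. Moreover all three $r_j^2$ are real: $r_1^2$ manifestly, and $r_{2,3}^2$ because the radicand defining $T_\mp$ is the sum of the nonnegative term $4\alpha\omega^2\beta^2\varrho_0$ and a perfect square, hence $\geq 0$ (here $\alpha>0$, $\varrho_0>0$ and $\omega^2=k^2v^2$ are used). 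Consequently each $r_j$ is non-real precisely when $r_j^2<0$, and the problem reduces to: for which $v$ are $r_1^2,r_2^2,r_3^2$ all strictly negative?

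Next I would locate the thresholds. The characteristic determinant factors into a $\beta$-independent transverse factor (linear in $r^2$) and a coupled dilatational--void factor (quadratic in $r^2$), reflecting that the in-plane divergence-free motion decouples from the volumetric microstructure. The hypotheses (via Lemma~\ref{lpd}) place us in the setting where $\mathcal Q$ and $\mathcal T$ are positive definite, and evaluating $r_1^2,r_{2,3}^2$ at $v=0$ shows all three are strictly negative, the coupled ones thanks to $(\lambda_{\rm e}+2\mu_{\rm e})\,\xi>\beta^2$; since each $r_j^2$ is real-analytic in $v^2$, non-realness is first lost at the smallest $v$ where some $r_j^2$ vanishes. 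Setting $r=0$ in the transverse factor returns the transverse bulk speed, while setting $r=0$ in the coupled factor yields a quadratic in $v^2$ whose smaller root is exactly $v_m^2$ of \eqref{v36} (the larger root being non-binding). Collecting these two explicit crossing values gives $\widehat v^{\,2}=\min\{v_s^2,v_m^2\}$ as claimed.

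I expect the delicate point to be the sign bookkeeping for the coupled pair: one must guarantee that for \emph{all} $0\leq v^2<\min\{v_s^2,v_m^2\}$ both coupled roots stay strictly negative \emph{simultaneously}, not merely that each vanishes at its own threshold. This is exactly where the sharp hypothesis $(\lambda_{\rm e}+2\mu_{\rm e})\,\xi>\beta^2$ is essential: evaluating the coupled quadratic in $r^2$ and tracking the sign of its constant term shows the product $r_2^2r_3^2$ remains positive, so by continuity from $v=0$ both roots remain negative until $v^2$ reaches $v_m^2$. A bookkeeping-free alternative is to invoke Proposition~\ref{propQp}: the positive definiteness of $\widetilde{Q}_\theta$ for every $\theta$ and every $0\leq v<\widehat v$ already forbids any real root below $\widehat v$, so it remains only to match the two explicitly computed crossing values against $v_s^2$ and $v_m^2$.
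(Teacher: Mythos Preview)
Your overall strategy is sound and is in fact \emph{more} explicit than the paper's treatment: the paper does not give a worked proof of Proposition~\ref{lemmaCG1} at all, but simply records the closed forms of $r_1^2,r_{2,3}^2$ (borrowed from \cite{ChiritaGhiba2}) and then states the proposition ``in view of the definition of the limiting speed, from Theorem~\ref{fmth}''. Your idea of (i) invoking Proposition~\ref{lemmaGH} for the abstract equivalence, (ii) using the parity of the sextic to reduce to the signs of three real quantities $r_j^2$, and (iii) locating the first zero-crossings, is exactly the computation one has to do to substantiate that one-line statement.

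There is, however, a genuine gap at the last step. You correctly observe that the determinant factors as a shear (SV) factor times a coupled longitudinal--void factor; the shear factor is $\mu_{\rm e}(1+r^2)-\varrho_0 v^2$, and you rightly say that setting $r=0$ there ``returns the transverse bulk speed''. But that speed is $v_t=\sqrt{\mu_{\rm e}/\varrho_0}$, \emph{not} $v_s=\sqrt{(\lambda_{\rm e}+2\mu_{\rm e})/\varrho_0}$. Carrying your own argument through therefore yields $\widehat v^{\,2}=\min\{v_t^2,v_m^2\}$, which does not coincide with the $\min\{v_s^2,v_m^2\}$ printed in \eqref{condspeed3} unless additional information (e.g.\ $v_m\le v_t$ for all admissible parameters) is supplied---and that inequality is \emph{not} generally true (take $\beta=0$ with a large void stiffness so that $v_m=v_s>v_t$). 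You silently jump from ``transverse bulk speed'' to ``$v_s$'' in your last sentence; this is the step that fails.

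A secondary caution: the displayed formula $r_1^2=k^2(1-c^2/v_t^2)$ in the paper carries the sign convention of \cite{ChiritaGhiba2} (decay exponent $b$ with ansatz $e^{-bkx_2}$) rather than the $r$ of ansatz~\eqref{x7}; at $v=0$ it gives $r_1^2=k^2>0$, i.e.\ a real root, which is the opposite of what Proposition~\ref{lemmaGH} asserts. With the $r$ of \eqref{x7} one has instead $r_1^2=v^2/v_t^2-1<0$ at $v=0$, consistent with your (correct) claim that all $r_j^2<0$ there. When you cite the ``explicit roots recorded just above'' you should flag this sign discrepancy, since otherwise your sentence ``evaluating $r_1^2,r_{2,3}^2$ at $v=0$ shows all three are strictly negative'' appears to contradict the very formulas you are invoking.
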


\section{Existence and uniqueness of Rayleigh waves }\label{NSE}\setcounter{equation}{0}

 Following the ideea by Fu and Mielke \cite{fu2002new} we are now looking at \eqref{n11} as  to an initial value problem and we search the solution in the form 
\begin{align}\label{08}
y(x_2)=e^{-k\,x_2 \,  {\mathcal{E}}}y(0),
\end{align}
where $  {\mathcal{E}}\in\mathbb{C}^{3 \times 3}$  is to be determined as solution (see  \eqref{08} and  \eqref{x8}) of
\begin{align}\label{09}
[  {\mathcal{T}}  {\mathcal{E}}^2- {\rm i}\, (  {\mathcal{R}}+  {\mathcal{R}}^T)  {\mathcal{E}}-  {\mathcal{Q}}+\ k^2v^2 {\id}]\, y(x_2)=0, \qquad\qquad \qquad  (-  {\mathcal{T}}  {\mathcal{E}}+ {\rm i}\, \,  {\mathcal{R}}^T)\,y(0)=0.
\end{align}
In order  to have a proper decay, the eigenvalues of $  {\mathcal{E}}$ have to be such that their real part is positive. The unknowns in \eqref{09} are $  {\mathcal{E}}\in\mathbb{C}^{3 \times 3}$ and $v$.

We are doing a change of variable by introducing  the so called  \textit{surface impedance matrix} 
\begin{align}\label{10}
  {  {\mathcal{M}}}=-(-	  {\mathcal{T}}  {\mathcal{E}}+ {\rm i}\,  {\mathcal{R}}^T)\qquad \iff \qquad   {\mathcal{E}}=  {\mathcal{T}}^{-1}(  {\mathcal{M}}+ {\rm i}\, \,  {\mathcal{R}}^T),
\end{align}
which, by   substituting $\eqref{10}_2$ into $\eqref{09}_1$, must be  a solution of the \textit{algebraic Riccati equation} 
\begin{align}\label{12}
(  {\mathcal{M}}- {\rm i}\,  {\mathcal{R}})  {\mathcal{T}}^{-1}(  {\mathcal{M}}+ {\rm i}\, \mathcal{R^T})-  {\mathcal{Q}}+k\, v^2\,{\id}=0, \qquad\qquad   {  {\mathcal{M}}}\, y(0)=0.
\end{align}

Since we are interested in a nontrivial solution  $y$, we impose $y(0)
\neq0$, so that the matrix $  {\mathcal{M}}$ has to satisfy
\begin{align}\label{x16}
{\rm det}\,  {\mathcal{M}}=0.
\end{align}
The  equation \eqref{x16} is called  \textit{secular equation} for the micro-voids model in terms of the \textit{impedance matrix} $  {\mathcal{M}}$.

The strategy to solve the system of nonlinear equations \eqref{12}$_1$ and \eqref{x16} is the following. We find an admissible domain for $v$ such that, for fixed $v$ in this domain we may define the mapping $v\mapsto  {  {\mathcal{M}}}_v$, where $  {\mathcal{M}}_v$ is the solution of \eqref{12}$_1$. Note that not any $  {\mathcal{M}}_v$ is admissible because $  {\mathcal{M}}_v$ has to be such that $\text{Re(spec}\,  {\mathcal{E}}{\rm )}$ is positive, 	where ``$\text{\rm Re\,(spec\,}  {\mathcal{E}}{\rm )}$" means the ``real part of spectra of $  {\mathcal{E}}$"\!\!. In the end, having $  {\mathcal{M}}_v$ we find $v$ as solution of \eqref{x16}. An important aspect is that we have to be sure that the solution $v$ belongs to the admissible domain considered for having a suitable $  {\mathcal{M}}_v$ and that the solution is unique, since otherwise the uniqueness of the matrix $  {\mathcal{M}}_v$ is violated.

Since $  {\mathcal{T}}$ and $  {\mathcal{Q}}$ are symmetric and positive definite matrices, many aspects from the above discussions are purely mathematical questions, and they are not specific to the elastic materials with micro-voids.
 Indeed,  since $  {\mathcal{T}}$,  $  {\mathcal{Q}}$ and $\widehat{\id}$ are symmetric real matrices,  the  results established in \cite{mielke2004uniqueness,fu2002new} remains valid in the  framework of the  elastic materials with micro-voids. 
 
 First, in view of Lemma \eqref{lpd} and the item i) of Theorem \ref{fmth}, if  the constitutive coefficients satisfy the conditions $\mu_{\rm e}>0,\,\lambda_{\rm e}+2\mu_{\rm e}>0,\,\alpha>0$ and $(\lambda_{\rm e}+2\mu_{\rm e})\,\xi >\beta ^2$
	and   $0\leq v<\widehat{v}$, the matrix problem 
	\begin{align}\label{13E}
	  {\mathcal{T}}  {\mathcal{E}}^2- {\rm i}\, \,(  {\mathcal{R}}+  {\mathcal{R}}^T)  {\mathcal{E}}-  {\mathcal{Q}}+ k^2 v^2{\id}=0,\qquad \text{\rm Re\,(spec\,}  {\mathcal{E}}{\rm )}>0,
	\end{align}
	has a unique solution for $  {\mathcal{E}}_v$  and the corresponding matrix $  {\mathcal{M}}_v$ obtained from $\eqref{10}_2$ is Hermitian.
Hence,  we know that for all $0\leq v<\widehat{v}$ there exists a unique solution $  {\mathcal{M}}_v$ of the Riccati equation \eqref{12}, defined by  the unique matrix $  {\mathcal{E}}_v$ indicated  in the item i) of Theorem \ref{fmth}, i.e.,  we can consider the mapping which associates to each  $0\leq v<\widehat{v}$ the Hermitian matrix $  {\mathcal{M}}_v$ satisfying the equation \eqref{12}. In addition, from item ii) of Theorem \ref{fmth} we know the full representation of the unique admissible solution $  {\mathcal{M}}_v$ of the algebraic Riccati equation \eqref{12} to be given by 
\begin{align}\label{explMielke}
{\mathcal{M}}_v= {H}_v^{-1}+ {\rm i}\, \,  {H}_v^{-1}\,  {S}_v, \qquad \text{with} \qquad  {H}_v=\frac{1}{\pi}\dd\int_{0}^{\pi}	  {\mathcal{T}}_  { \theta} ^{-1}\, d\theta, \qquad   {S}_v=-\frac{1}{\pi}\int_{0}^{\pi}  {\mathcal{T}}_  { \theta} ^{-1}  {\mathcal{R}}_\theta  ^T\, d\theta,
\end{align}
where

\begin{align}\label{21}
{\mathcal{T}}_{ \theta} =\cos^2\theta\,  {\mathcal{T}}-\sin\theta\cos\theta\,(  {\mathcal{R}}+  {\mathcal{R}}^T)+\sin^2\theta\,\widetilde{  {\mathcal{Q}}},\notag\vspace{2mm}\\
{\mathcal{R}}_\theta  =\cos^2\theta\,  {\mathcal{R}}+\sin\theta\cos\theta
\,(  {\mathcal{T}}-\widetilde{  {\mathcal{Q}}})-\sin^2\theta\,  {\mathcal{R}}^T,\vspace{2mm}\\
\widetilde{  {\mathcal{Q}}}_\theta=\cos^2\theta\,\widetilde{  {\mathcal{Q}}}+\sin\theta\cos\theta\,(  {\mathcal{R}}+  {\mathcal{R}}^T)+\sin^2\theta\,  {\mathcal{T}}\notag
\end{align}
and  $\widetilde{  {\mathcal{Q}}}=  {\mathcal{Q}}-k\,v^2\,{\id}$, while $\theta$ is an arbitrary angle.
Let us point out that \eqref{explMielke} allows us to obtain the explicit   form of the secular equation $\det   {\mathcal{M}}_v=0$ without a priori knowing the analytical expressions (as function of the wave speed) of the eigenvalues  that satisfy \eqref{x9}  and the associated eigenvector, which is the major difficulty in almost all the generalised models when other methods are used.  After having the admissible solution of the secular equation, we will go back to the task of finding the eigenvalues  that satisfy \eqref{x9}  and the associated eigenvector. But, by doing so this task  becomes a purely numerical task, avoiding symbolic (analytical) computations, since all the involved quantities will be known, as numbers.

Then, after replacing  the solution $  {\mathcal{M}}_v$ of \eqref{12} into  \eqref{x16n},  the pair $(v,  {  {\mathcal{M}}}_v)$ must  also be a solution of the secular equation  \eqref{x16}, i.e., 
\begin{align}\label{x16n}
{\rm det}\,  {\mathcal{M}}_v=0.
\end{align}
The unique unknown is now $v$ and we have to check if   the resulting equation will lead to a unique wave speed belonging to the interval $[0,\widehat{v})$. This is necessary because under this assumption we have constructed the mapping $v\mapsto  {  {\mathcal{M}}}_v$ with  ${  {\mathcal{M}}}_v$ admissible. But, according to item iii) of  Theorem \ref{fmth}, the matrix $  {\mathcal{M}}_v$ determined by \eqref{explMielke} satisfies the conditions  \begin{enumerate}
	\item $  {\mathcal{M}}_v$ is Hermitian,
	\item  $\frac{d  {  {\mathcal{M}}}_v}{d v}$ is negative definite,
	\item  $\tr(  {  {\mathcal{M}}}_v)\geq 0$, and $\langle  w,  {  {\mathcal{M}}}_v\,w\rangle \geq 0$ for all real vectors $w$ for  all $0\leq v\leq  \widehat{v}$,
	\item $  {\mathcal{M}}_v$ is and positive definite for  all $0\leq v< \widehat{v}$.
\end{enumerate} and all these  imply the existence of a unique subsonic solution of the secular equation. 
Summarizing, using Theorem \ref{fmth}, we have
\begin{theorem}\label{emr}
	Assume  the constitutive coefficients satisfy the conditions 	$\mu_{\rm e}>0,\,\lambda_{\rm e}+2\mu_{\rm e}>0,\,\alpha>0$ and $(\lambda_{\rm e}+2\mu_{\rm e})\,\xi >\beta ^2$ 
	and $  {\mathcal{M}}_v$ is given by  $\eqref{explMielke}$, then the secular equation
	$
	\det  {  {\mathcal{M}}}_v=0
	$
	 has a unique admissible solution $0\leq v<\widehat{v}$.
\end{theorem}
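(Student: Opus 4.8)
The plan is to obtain Theorem~\ref{emr} as a direct specialization of the abstract Fu--Mielke result, Theorem~\ref{fmth}, applied to the micro-voids matrices under the identification $\mathcal{X}=\mathcal{T}$, $\mathcal{Y}=\mathcal{Q}$, $\mathcal{Z}=\mathcal{R}$ from \eqref{nTQ}. The only model-specific input required is that the hypotheses of Theorem~\ref{fmth} hold, namely that $\mathcal{T}$ and $\mathcal{Q}$ are symmetric positive definite and that a positive limiting speed exists. First I would invoke Lemma~\ref{lpd}, which under the constitutive restrictions $\mu_{\rm e}>0$, $\lambda_{\rm e}+2\mu_{\rm e}>0$, $\alpha>0$, $(\lambda_{\rm e}+2\mu_{\rm e})\,\xi>\beta^2$ yields exactly the positive definiteness of $\mathcal{T}$ and $\mathcal{Q}$, together with Proposition~\ref{lemmaGH} (made explicit in Proposition~\ref{lemmaCG1}), which produces the limiting speed $\widehat{v}>0$. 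With these in hand, items i) and ii) of Theorem~\ref{fmth} guarantee that for every $0\le v<\widehat{v}$ the matrix equation \eqref{09} has a unique admissible solution $\mathcal{E}_v$ and that the associated impedance matrix $\mathcal{M}_v$ of \eqref{12} is Hermitian and is given in closed form by \eqref{explMielke}. Hence the map $v\mapsto\mathcal{M}_v$ on $[0,\widehat{v})$ is well defined and Hermitian-valued, so that $\det\mathcal{M}_v\in\mathbb{R}$ and the secular equation \eqref{x16n} is meaningful.

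Next I would carry out the monotonicity argument underlying item iv) of Theorem~\ref{fmth}. Since $\mathcal{M}_v$ is Hermitian its eigenvalues are real, and since $\tfrac{d\mathcal{M}_v}{dv}$ is negative definite (item iii-b), each eigenvalue of $\mathcal{M}_v$ is a strictly decreasing function of $v$ on $[0,\widehat{v})$ by Weyl monotonicity. At $v=0$ the matrix $\mathcal{M}_0$ is positive definite, so $\det\mathcal{M}_0>0$; the real part $\tfrac12(\mathcal{M}_v+\mathcal{M}_v^T)$ stays positive semidefinite throughout (this is precisely the content of item iii-c, since $\langle w,\mathcal{M}_v\,w\rangle$ reduces to a real quadratic form for real $w$). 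Existence then follows from a sign change of $v\mapsto\det\mathcal{M}_v$: the lowest eigenvalue of $\mathcal{M}_v$, decreasing strictly from a positive value, is forced to reach zero at some $v\in[0,\widehat{v})$ because of the degeneration of the rotated matrices $\widetilde{\mathcal{Q}}_\theta$ and $\mathcal{T}_\theta$ at the limiting speed (cf. Proposition~\ref{propQp} and the footnote to Theorem~\ref{fmth}). Uniqueness follows from strict monotonicity: at a root $v^{*}$ with null eigenvector $w$, that eigenvalue crosses zero with strictly negative slope $\langle w,\tfrac{d\mathcal{M}_v}{dv}\,w\rangle<0$, so it vanishes only once, while the remaining eigenvalues stay positive up to $v^{*}$, confining the zero of the determinant to this single simple eigenvalue.

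The conceptually delicate step, and the main obstacle, is the existence half: ruling out that the relevant eigenvalue of $\mathcal{M}_v$ stays bounded away from zero on all of $[0,\widehat{v})$, so that the secular function actually attains a zero strictly inside the subsonic range rather than only in the limit $v\to\widehat{v}$. This is exactly where the characterization of $\widehat{v}$ as the lowest speed at which $\widetilde{\mathcal{Q}}_\theta$ and $\mathcal{T}_\theta$ become singular (Propositions~\ref{lemmaGH}, \ref{propQp}) is indispensable, and it is the part genuinely inherited from the structure of Theorem~\ref{fmth}. Because $\mathcal{T}$, $\mathcal{Q}$ and $\widehat{\id}$ are symmetric and real, none of these steps uses any feature of the micro-voids model beyond the positive definiteness secured by Lemma~\ref{lpd}; the abstract argument therefore transfers verbatim and delivers the unique admissible Rayleigh speed $0\le v<\widehat{v}$ asserted in Theorem~\ref{emr}.
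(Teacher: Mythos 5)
Your proposal is correct and follows essentially the same route as the paper: the paper's entire proof of Theorem \ref{emr} consists of checking the hypotheses of Theorem \ref{fmth} under the identification $\mathcal{X}=\mathcal{T}$, $\mathcal{Y}=\mathcal{Q}$, $\mathcal{Z}=\mathcal{R}$ (symmetry and positive definiteness via Lemma \ref{lpd}, existence of the limiting speed $\widehat{v}>0$ via Propositions \ref{lemmaGH} and \ref{lemmaCG1}) and then invoking items i)--iv) of that theorem, exactly as you do.

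One caution about the part where you go beyond the paper and attempt to reconstruct the argument underlying item iv): your uniqueness step is incomplete as stated. Strict decrease of each real eigenvalue of the Hermitian matrix $\mathcal{M}_v$ shows that the eigenvalue vanishing at the first root $v^{*}$ never vanishes again, and that the other eigenvalues are positive up to $v^{*}$; but it does not exclude that a second, different eigenvalue crosses zero at some later $v^{**}\in(v^{*},\widehat{v})$, which would produce a second zero of $\det\mathcal{M}_v$ in the admissible interval. What rules this out is precisely item iii-c, which you cite only in the existence discussion: if two eigenvalues were strictly negative at some subsonic speed, the associated two-dimensional complex spectral subspace would contain a nonzero real vector $w$ (its orthogonal complement is one-dimensional, imposing only two linear conditions on $\mathbb{R}^3$), giving $\langle w,\mathcal{M}_v\,w\rangle<0$ and contradicting $\langle w,\mathcal{M}_v\,w\rangle\geq 0$ for all real $w$ on $[0,\widehat{v}]$. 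Hence at most one eigenvalue can become non-positive, and uniqueness follows. Since the paper itself treats item iv) as a black box, this does not invalidate your proof read as an application of Theorem \ref{fmth}; but the monotonicity sketch you give should not be mistaken for a self-contained replacement of item iv) -- the same applies to the existence half, which you correctly flag as inherited from the cited theorem rather than re-derived.
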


The Theorem \ref{emr} serves as the final checkpoint in our algorithm, essentially validating the feasibility of our strategy for solving the nonlinear equations \eqref{12}$_1$ and \eqref{x16}. It conclusively demonstrates that there is indeed an unique Rayleigh wave propagating within isotropic elastic materials containing micro-voids, and that this wave is unique.
\section{Numerical implementation}\label{Num1}\setcounter{equation}{0}

In this section we follow the theoretical solution  strategy given in the previous sections to give the numerical solution for the material design properties for structural steel S235, for the following values of the Poisson ratio and the Young modulus for the  macroscopic structure
\begin{align}\label{numcoef1}
\nu_{\rm macro}=0.3,\qquad E_{\rm macro}=210 \,\textrm{(GPa)},\qquad \varrho_0=7850 \,\textrm{($kg/m^3$)},
\end{align}
and we take the wave number $k=1 \,\textrm{($m^{-1}$)}.$
Accordingly, the considered Lam\'e coefficient of the macroscopic structure are given by
\begin{align}
\lambda _{\text{macro}}&=\frac{E_{\rm macro}}{2 (\nu_{\rm macro} +1)}=121.154 \,\textrm{(GPa)},\qquad \qquad \mu _{\text{macro}}=\frac{\nu_{\rm macro}  E_{\rm macro}}{(\nu_{\rm macro} +1) (1-2\, \nu_{\rm macro} )}=80.7692 \,\textrm{(GPa)}, \\ \kappa_{\text{macro}}&=\frac{1}{3} \left(3\, \lambda _{\text{macro}}+2\, \mu _{\text{macro}}\right)=161.538\,\textrm{(GPa)}.\notag
\end{align}
We assume
\begin{align}
\lambda _{\text{micro}}=:2 \,\lambda _{\text{macro}}=242.308 \,\textrm{(GPa)}, \qquad \mu _{\text{micro}}=:2 \,\mu _{\text{macro}}=161.538 \,\textrm{(GPa)},
\end{align}
and we find the following values of the elastic constitutive coefficients 
\begin{align}
\mu _e&=-\frac{\mu _{\text{macro}}\, \mu _{\text{micro}}}{\mu _{\text{macro}}-\mu _{\text{micro}}}=242.308\,\textrm{(GPa)}, \qquad \kappa_e=-\frac{\kappa_{\text{macro}}\, \kappa_{\text{micro}}}{\kappa\kappa_{\text{macro}}-\kappa_{\text{micro}}}=323.077\,\textrm{(GPa)}, \\ \lambda _e&=\frac{1}{3} \left(3\, \kappa_e-2\, \mu _e\right)=161.538\,\textrm{(GPa)}, \notag
\end{align}
and
\begin{align}
\beta =-3\, \kappa_e=-969.231\,\textrm{(GPa)},\qquad \xi =9 \left(\kappa_e+\kappa_{\text{micro}}\right)=5815.38\,\textrm{(GPa)}.
\end{align}

We choose the values of $\eta$, $L_{\rm c}$, $a_2$ and $\tau_{\rm c}$ such that
\begin{align}
\varkappa&=3\,\eta \,\tau_c^2=1.5 \,\textrm{($m^2$)}, \quad\quad \quad\quad\alpha=2\, \mu_{\rm e}L_c^2a_2=10\,\textrm{(GPa  $m^2$)}.
\end{align}

For the chosen  material, using (\ref{limCh}) the value of the limiting speed is found to be $\widehat{v}=0.242876  \,(m/s)$. An admissible  solution of the secular equation  is a positive wave speed less then $\widehat{v}$. As we have analytically proven such a solution exists and it is unique.
We find the unique solution of the secular equation using the interpolating process on a set of 100 values in the interval $[0,\widehat{v})$. We compute numerically the integrals in (\ref{explMielke}) since we do not have the symbolic values of them. Using interpolation we find a function $v\mapsto f(v)$ that approximates the function  $v\mapsto \det\mathcal{M}_v$, on $[0,\widehat{v})$, see Figure \ref{fig:method}. The root of the approximation function $f$ on $[0,\widehat{v})$ is $v_0=0.153005 \,(m/s)$. Our numerical interpolation algorithm of the function $v\mapsto \det{{\mathcal{M}}}_v$ is made on 100 points just to exemplify the method. One can increase the number of points and use more precision options in order to increase the precision of the results. 

\begin{figure}[h!]
	\centering
	\includegraphics[width=12cm]{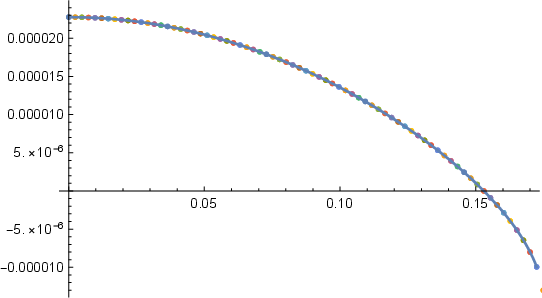}
	\caption{\footnotesize A plot of $\det\mathcal{M}_v$ with respect to the surface waves speed $v$ for a set of 100 equidistant values in the interval $[0,\widehat{v})$. It is illustrated that $\det\mathcal{M}_v$ is  a decreasing function of the wave speed $v$.}
	\label{fig:method}
\end{figure} 

Having the wave speed $v_0$, the approximation of the solution of the secular equation we find $y(0)$ as solution of the algebraic Riccati equation (\ref{12}) in the form
\begin{align}
y(0)=\left(
\begin{array}{c}
y_1 \\
-1.263037839859903454 {\rm i} \,y_1 \\
0.20925703711388041\, y_1 \\
\end{array}
\right), \qquad 	 y_1\in \mathbb{C}.
\end{align}

We also have computed  $\mathcal{M}_{v_0}$ to be
\begin{align}
\mathcal{M}_{v_0}=
\left(
\begin{array}{ccc}
0.030152425181168009480 & -0.023503969977247586063 {\rm i} & -0.0022270300650370918782 \\
0.023503969977247580765\,{\rm i} & 0.018790427909898263031 & 0.0010945940135284871257\, {\rm i} \\
-0.0022270300650370918782 & -0.0010945940135284891285 \, {\rm i} & 0.017249300922842580276 
\end{array}
\right)
\end{align}
(its determinant $\det \mathcal{M}_{v_0}=-4.135543508749075155*10^{-13}$ is approximately  zero as we expected from  Theorem \ref{emr}) and the matrix ${\mathcal{E}_{v_0}}=  {\mathcal{T}}^{-1}(  {\mathcal{M}_{v_0}}+ {\rm i}\, \,  {\mathcal{R}}^T)$ is numerically approximated by
\begin{align}
{\mathcal{E}_{v_0}}=
\left(
\begin{array}{ccc}
0.976843 & 0.238546 \,{\rm i} & -0.0721487 \\
0.535545 \,{\rm i} & 0.228281 &  -1.21145 \,{\rm i} \\
-2.62233 &  -1.28888 \,{\rm i} & 20.3111 \\
\end{array}
\right).
\end{align}

Having the matrix ${\mathcal{E}_{v_0}}$, using (\ref{08}) we are able to find the function $y(x_2)$. In this way, going back further with the changes of variables $y(x_2):=\widehat{\id}^{1/2}\,z(x_2)$ we find $z(x_2)$ and one step back, using (\ref{x5}), we find the solution $\mathcal{U}(x_1,x_2,t)$, see Figure \ref{u1u2}.
\begin{figure}[h!]
		\centering 
		\begin{subfigure}{.31\textwidth}
			\includegraphics[width=\linewidth]{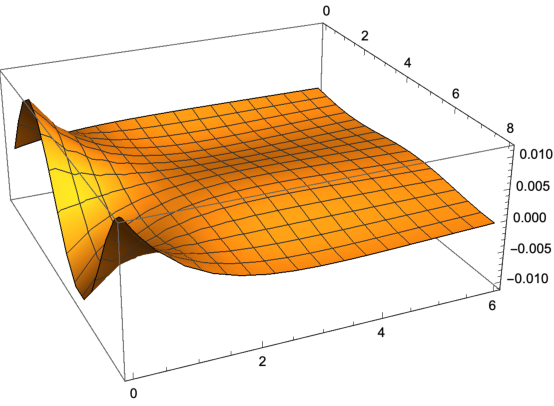}
			\caption{Plot of the $u_1$-component of the displacement.}
		\end{subfigure}\quad\  \qquad \qquad 
		\begin{subfigure}{.31\textwidth}
			\includegraphics[width=\linewidth]{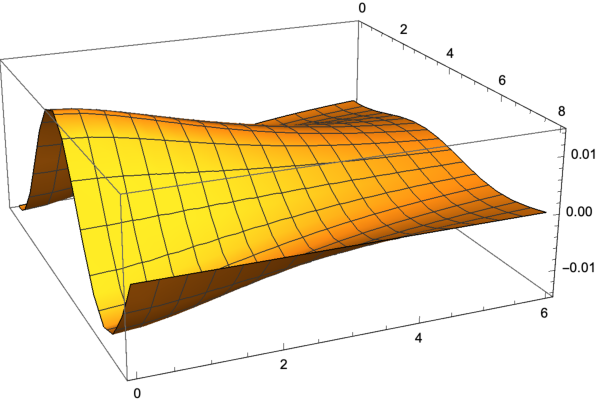}
			\caption{Plot of the $u_2$-component of the displacement.}
		\end{subfigure}\qquad \qquad \ 
		\begin{subfigure}{.35\textwidth}
			\includegraphics[width=\linewidth]{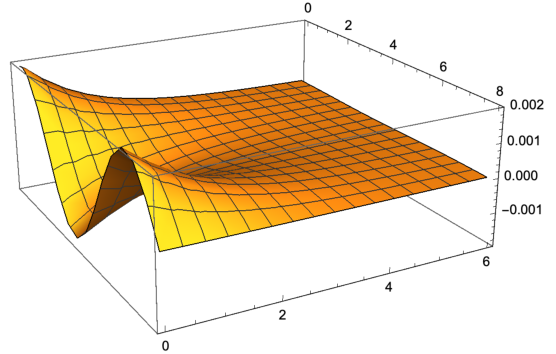}
			\caption{Plot of the volume fraction field $\zeta$ from $P=\zeta\!\!\cdot\!\! \id$.}
		\end{subfigure}
		\caption{The plot of the solution at time $t=1$ and for the choice $y_1=-{\rm i}\,$.}\label{u1u2}
	\end{figure}

\begin{figure}[h!]
	\centering
	\includegraphics[width=10cm]{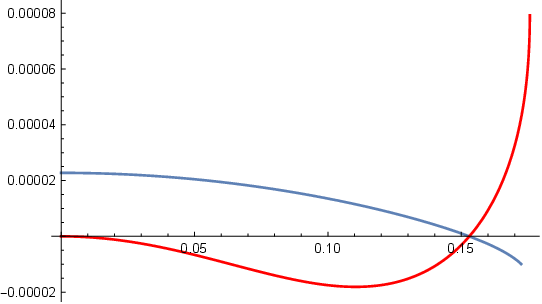}
	\put(-165,0){\footnotesize $R(c)$ function from \cite{ChiritaGhiba2}}
	\put(-150,70){\footnotesize our approach}	\caption{\footnotesize The plots of the approximation of $v\mapsto \det{{\mathcal{M}}}_v$ (blue curve) and of $v\mapsto R(v) 10^{-8}$ (red curve), where $R(v)$ from (\ref{gensec}) defines the secular equations from \cite{ChiritaGhiba2}, with respect to the surface waves speed $v$ for the  material we considered. These two different functions have the same (unique) root in the interval $[0,\widehat{v})$. We have multiplied $R(v)$ with $10^{-8}$ to have the same range plot interval of the values for both functions.}
	\label{method2}
\end{figure} 

In \cite{ChiritaGhiba2} the secular equation has the form $R(c)=0$, where $c$ is the wave speed and
\begin{align}%
R(v)\equiv&\displaystyle\left[  \left( 2-\frac{v^{2}}{v_{t}^{2}}\right) ^{2}-4\sqrt{\left(
	1-\frac{v^{2}}{v_{t}^{2}}\right)  \left(  1-\frac {X_{1}^{2}}{k^{2}}\right)  }\ \right]
\sqrt{1-\frac{X_{2}^{2}}{k^{2}}}\left(
-\frac{v^{2}}{v_{s}^{2}}+\frac{X_{2}^{2}}{k^{2}}\right)  \vspace{1mm}  \notag\\
&-\displaystyle\left[  \left(  2-\frac{v^{2}}{v_{t}^{2}}\right)  ^{2}%
-4\sqrt{\left(  1-\frac{v^{2}}{v_{t}^{2}}\right)  \left(  1-\frac{X_{2}^{2}%
	}{\kappa^{2}}\right)  }\ \right] \sqrt{1-\frac{X_{1}^{2}}{k^{2}}}\left(
-\frac{v^{2}}{v_{s}^{2}}+\frac{X_{1}^{2}}{k^{2}}\right), 
\label{gensec}%
\end{align}
where
\begin{align}
X_{1,2}^{2}=X_{1,2}^{2}(\omega^{2})= & \displaystyle\frac{1}{2\alpha(\lambda_{\rm e}+2\mu_{\rm e}
	)}\Bigg\{{\beta }^{2}-(\xi -{\varrho_0 }\varkappa\omega^{2})(\lambda_{\rm e}+2\mu_{\rm e})+\alpha
\omega^{2}{\varrho_0 }\nonumber\\
&
\mp\sqrt{4\alpha\omega^{2}{\beta }^{2}{\varrho_0 }+[-{\beta }^{2}+\alpha\omega
	^{2}{\varrho_0 }+(\xi -{\varrho_0 }\varkappa\omega^{2})(\lambda_{\rm e}+2\mu_{\rm e})]^{2}}\Bigg\}.
\end{align}

In \cite{ChiritaGhiba2} the uniqueness of the solution was not demonstrated and the existence of the solution of the secular equation was proven assuming the conditions (\ref{auxasump0}). These conditions are more restrictive in comparison with the assumptions (\ref{condspeed3}). Our analysis is valid for all materials admitting planar real wave propagation and do not  enter into conflict with the necessary assumptions for classical linear elasticity. With less restrictive conditions on the constitutive parameters (connected with conditions usually imposed by the engineering community)  we have proven that the solution exists and it is unique. More than that the algorithm for computing the Rayleigh wave solution presented in this paper is clear and simple without knowing in advance the analytical expressions of some eigenvalues and their associated eigenvectors (the main difficulty to obtain a complete algorithm and the proof of existence and uniqueness in the other methods).

For the coefficients considered in this paper, the conditions \eqref{auxasump0} from \cite{ChiritaGhiba2} are not verified (${c}_{t}^{2}\leq {c}_{s}^{2}-\displaystyle\frac{{\beta }^{2}}%
{{\varrho_0 }\xi }$ is true, but \ ${c}_{t}^{2}
\leq\frac{\alpha}{{\varrho_0 } \varkappa}$ is not valid). However, as one can see in Figure \ref{method2} (the red curve) the secular equation from \cite{ChiritaGhiba2} is well defined  and it still has a unique solution in the interval $[0,\widehat{v})$. However, for the values of the constitutive coefficients considered, the existence  and uniqueness  cannot be guaranteed by the results established in \cite{ChiritaGhiba2}.
From Figure \ref{method2} we can see that the same value of the propagation speed is obtained, from both forms of the secular equation (our vs. \cite{ChiritaGhiba2}). This represents another check that the algorithm proposed by us is  viable and the computations are correct.

The same value $v_0=0.113175\,(m/s)$  is obtained as limit case in our analysis of the micro-voids model for structural steel S235 and large values of the micro-constitutive parameter  $\mu_{\rm micro}$ (i.e., a large $\xi$), e.g.,
$
\mu _{\text{micro}}=10^7 \,\mu _{\text{macro}}=10^7*161.538 \,\textrm{(GPa)},
$
which indicates the convergence of the Rayleigh wave solution from the micro-void model to the Rayleigh wave solution from classical elasticity when $\mu_{\rm micro}\to \infty$.

\bibliographystyle{plain} 

\bibliographystyle{plain} 
\addcontentsline{toc}{section}{References}

\begin{footnotesize}

\end{footnotesize}
 \end{document}